\title{Free Monad Sequences and Extension Operations}
\date{
\\
2016-09-18: initial version
\\
2024-09-09: added cube diagram in Section 3.4
\\
2025-04-10: added version history, arXiving}
\begin{document}

\begin{abstract}
In the first part of this article, we give an analysis of the free monad sequence in non-cocomplete categories, with the needed colimits explicitly parametrized.
This enables us to state a more finely grained functoriality principle for free monad and monoid sequences.

In the second part, we deal with the problem of functorially extending via pullback squares a category of maps along the category of coalgebras of an algebraic weak factorization system.
This generalizes the classical problem of extending a class of maps along the left class of a weak factorization system in the sense of pullback squares where the vertical maps are in the chosen class and the bottom map is in the left class.
Such situations arise in the context of model structures where one might wish to extend fibrations along trivial cofibrations.
We derive suitable conditions for the algebraic analogue of weak saturation of the extension problem, using the results of the first part to reduce the technical burden.
\end{abstract}

\author{Christian Sattler}

\maketitle

\tableofcontents

\newcommand{\ConfWP}[1]{\operatorname{ConfMnd}_{\text{wp}}^{{#1}}}
\newcommand{\ConfP}[1]{\operatorname{ConfMnd}_{\text{p}}^{{#1}}}
\newcommand{\ConfPM}[1]{\operatorname{ConfMon}_{\text{p}}^{{#1}}}
\newcommand{\Conf}{\operatorname{Conf}}
\newcommand{\FF}{\mathbf{FF}}

\section{Introduction}

Let $\cal{E}$ be a locally presentable category.
Let $(\cal{L}, \cal{R})$ be a weak factorization system cofibrantly generated by a set $\cal{I} \subseteq \cal{E}^\to$ of arrows, for example the (trivial cofibration, fibration) weak factorization system of a model structure.
Let further $\cal{F} \subseteq \cal{E}^\to$ be a class of arrows, for example the class of fibrations.
In certain situations, one wishes to extend maps in $\cal{F}$ along maps in $\cal{L}$, in the following sense: given a map $A \to B$ in $\cal{L}$ and a map $X \to A$ in $\cal{F}$, one wishes to find a pullback square
\begin{equation} \label{extension-square}
\begin{gathered}
\xymatrix{
  X
  \ar@{.>}[r]
  \ar[d]^{\in \cal{F}}
  \pullback{dr}
&
  Y
  \ar@{.>}[d]^{\in \cal{F}}
\\
  A
  \ar[r]
&
  B
}
\end{gathered}
\end{equation}
with $Y \to B$ again in $\cal{F}$.
We say that $A \to B$ admits \emph{extension} if this is the case for all $X \to A$ in $\cal{F}$.
Let us take for granted that maps in $\cal{I}$ admit extension.
One then needs to show that the class of maps admitting extension is weakly saturated.

If $\cal{E}$ is a presheaf category and $\cal{F}$ is a class of local fibrations with size-bounded fibers in the sense of Cisinksi~\cite{cisinski-univalence}, then $\cal{E}$ admits a classifier $U$ for maps in $\cal{F}$.
If furthermore all maps in $\cal{I}$ are objectwise decidable monomorphisms, one may phrase the extension problem~\eqref{extension-square} as a lifting problem
\[
\xymatrix{
  A
  \ar[r]
  \ar[d]
&
  U
\\
  B
  \ar@{.>}[ur]
}
\]
against $U$.
Weak saturation is then automatic.
Unfortunately, an inspection of~\cite{cisinski-univalence} reveals the restriction to objectwise decidable monomorphisms in $\cal{I}$ as essential.

More generally, one may separately verify that the class of maps admitting extension is closed under the saturation operations of coproduct, pushout, transfinite composition, and retract.
Under the assumption that $\cal{F}$ is closed under colimits in $\cal{E}^\to$ \notec{colimits in $\cat{E}^\to_{\cart}$?}, this boils down to showing that the colimits of the saturation operations are van Kampen in the sense of~\cite{heindel:van-kampen-universal}.
This is the case if $\cal{I}$ consists of monomorphisms and $\cal{E}$ is a topos, or more generally an extensive, adhesive, and exhaustive category.
Having eliminated the use of a classifier, no counterpart to objectwise decidability is needed here.

\addvspace{0.3cm}

The goal of this article is to provide a result analogous to the preceding paragraph for the algebraic setting, starting with an algebraic weak factorization system $(L, R)$ on $\cal{E}$ cofibrantly generated by a small category $u \co \cal{I} \to \cal{E}^\to$ of arrows in the sense of Garner's algebraic variant of the small object argument~\cite{garner:small-object-argument}, for example the (trivial cofibration, fibration) algebraic weak factorization system of an algebraic model structure.
Instead of a class $\cal{F}$, we consider a category $v \co \cal{F} \to \cal{E}^\to$, for example the category of fibrations $\cal{F} \defeq \liftr{\cal{I}}$ consisting of maps together with a coherent family of lifts against maps in $\cal{I}$.
Extension~\eqref{extension-square} along $A \to B$ now involves finding a lift of $Y \to B$ to $\cal{F}$, given a map $X \to A$ with a lift to $\cal{F}$.
Furthermore, we require the square, read horizontally as a morphisms of arrows, to lift to a morphism in $\cal{F}$.
Altogether, an \emph{extension operation} of $\cal{F}$ along a category of arrows $\cal{J} \to \cal{E}^\to$ should give rise to a functor $\cal{F} \times_{\cal{E}} \cal{J} \to \cal{F}^\to$.
Our aim then is to get an extension operation of $\cal{F}$ along $\coalg(L)$ from an extension operation of $\cal{F}$ along $\cal{I}$ subject to assumptions to be identified.

To see how $\coalg(L)$ arises from $\cal{I}$, we have to inspect the algebraic small object argument~\cite{garner:small-object-argument}.
One first constructs a single-step functorial factorization $(L_0, R_0)$ freely on $\cal{I}$ where $R_0$ does not yet come with a monad multiplication.
One then uses Kelly's~\cite{kelly:transfinite} free monad sequence on the pointed endofunctor $R_0$ to construct the monad part $R$ of the final algebraic weak factorization system $(L, R)$.
Finally, arrows lifting to $\coalg(L)$ arise functorially as retracts of $L$, \ie the domain part of the unit of $R$.
Letting $\eta_0$ and $\eta$ denote the respective units of $R_0$ and $R$, the most intricate part consists of going from an extension operation of $\cal{F}$ along ${\dom} \cc \eta_0$ to an extension operation along ${\dom} \cc \eta$.

Differing from the standard small object argument, the algebraic variant follows up each step of freely adding fillers by quotienting out fillers already added in previous steps.
In the explication~\cite[Section~17]{kelly:transfinite} of the free monad sequence on a pointed endofunctor, this is done using certain coequalizers.
This is also the description of~\cite[Paragraph~4.16]{garner:small-object-argument} in the context of the free monoid sequence.
Unfortunately, these coequalizers are not generally van Kampen (though this can be remedied by enlarging the shape of the colimit diagram by an initial object), and so the explication is not of direct use to us.
However, the reduction to the free monad sequence on a wellpointed endofunctor~\cite[Section~14]{kelly:transfinite} has the correct van Kampen colimits under suitable assumptions.
The extension operation of $\cal{F}$ along $\coalg(L)$ can then again be built up step-by-step.

The basic strategy outlined above can be made less tedious through an abstraction.
In going from extending along ${\dom} \cc \eta_0$ to extending along ${\dom} \cc \eta$, we do in fact not need to manually push extension through each step of the free monad sequence, we merely have to use certain functoriality properties.
An extension operation of $\cal{F}$ along ${\dom} \cc \eta_0$ gives rise to a pointed endofunctor $R_0'$ on $\cal{F}_{\cart} \times_{\cal{E}} \cal{E}^\to$ that extends $R_0$.
Here, $\cal{F}_{\cart}$ denotes the wide subcategory of $\cal{F}$ with maps restricted to pullback squares.
The forgetful functor $\cal{F}_{\cart} \times_{\cal{E}} \cal{E}^\to \to \cal{E}^\to$ will not generally preserve all connected colimits, but enough for the free monad sequence for $R_0'$ to map to the free monad sequence for $R_0$.
The free monad on $R_0'$ then provides an extension operation along ${\dom} \cc \eta$.

It will thus pay off to separately develop the free monad sequence in a potentially non-cocomplete setting, finding a formalism that enables us to express the needed colimits.
For fine grained functoriality of the free monad sequence, we then merely have to require preservation of the postulated colimits.
This development will be undertaken separately in \cref{free-monad-sequence} as we believe it to be of independent interest.

\section{Preliminaries}
\label{preliminaries}

\subsection{Van Kampen colimits}

Let $\cal{E}$ be a category.
We write $\cal{E}^\to$ for its arrow category and $\cal{E}^\to_{\cart}$ for the wide subcategory of $\cal{E}^\to$ with morphisms restricted to pullback squares.
Recall the following notion from~\cite{heindel:van-kampen-universal}.
\notec{What is an earlier reference?}

\begin{definition} \label{van-kampen}
A colimiting cocone $u$ of a diagram $F \co \cal{S} \to \cal{E}$ is called \emph{van Kampen} if:
\begin{enumerate}[label=(\roman*)]
\item it is stable under pullback,
\item for any lift of $F$ through $\cod \co \cal{E}^\to_{\cart} \to \cal{E}$ and compatible lift of $u$ through $\cod \co \cal{E}^\to \to \cal{E}$, we have that $u$ lifts through $\cal{E}^\to_{\cart} \to \cal{E}^\to$.
\end{enumerate}
\end{definition}

If $\cal{E}$ has pullbacks, then a colimit in $\cal{E}$ van Kampen precisely if it is preserved (as a weak 2-limit) by the self-indexing weak 2-functor $\cal{E}^{\op} \to \Cat$.
Our main usage pattern of the van Kampen condition is given by the following statement.

\begin{proposition} \label{van-kampen-lift}
Let $\Xi$ be a pullback-stable class of van Kampen colimiting cocones in $\cal{E}$.
Then $\cod \co \cal{E}^\to_{\cart} \to \cal{E}$ lifts colimits in $\Xi$, and $\cal{E}^\to_{\cart} \to \cal{E}^\to$ preserves these lifted colimits.
\end{proposition}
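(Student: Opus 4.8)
The plan is to unpack \cref{van-kampen} into its two standard constituents — \emph{universality} (stability under pullback, condition (i)) and \emph{effective descent} (condition (ii)) — and to use the latter to produce the apex of the lifted colimit and the former to verify the universal property. Fix a diagram $\bar F \co \cal{S} \to \cal{E}^\to_{\cart}$, write $F \defeq \cod \circ \bar F$, and suppose $u \in \Xi$ is van Kampen with apex $C$ and legs $u_s \co F(s) \to C$. Writing $\bar F(s) = (p_s \co E_s \to F(s))$, the fact that the transition morphisms of $\bar F$ are pullback squares says precisely that $(p_s)_s$ is a cartesian lift of $F$, \ie descent data along $u$. First I would invoke the effective-descent content of condition (ii) to obtain an arrow $p \co E \to C$ over $C$ together with pullback squares $\ell_s \co \bar F(s) \to p$ lying over the legs $u_s$. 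Pulling $p$ back along the cocone and applying condition (i) then shows that $(u_s^* E)_s \cong (E_s)_s$ is colimiting, so $E \cong \operatorname{colim}_s E_s$ and the legs $\iota_s \co E_s \to E$ are jointly epimorphic. This exhibits $(p, \ell_s)$ as a cocone over $\bar F$ with cartesian legs whose image under $\cod$ is $u$, hence a cocone in $\cal{E}^\to_{\cart}$.

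Next I would verify that $(p, \ell_s)$ is already colimiting in $\cal{E}^\to$, which simultaneously yields the preservation claim. Given any cocone $(m_s)_s$ over $\bar F$ in $\cal{E}^\to$ with apex $q \co V \to W$, the codomain components assemble into a cocone $F \Rightarrow W$ inducing a unique $g \co C \to W$, while the domain components form a cocone out of $\dom \circ \bar F$ inducing a unique $e \co E \to V$ from $E \cong \operatorname{colim}_s E_s$. Testing against the jointly epimorphic $\iota_s$ gives $q e \, \iota_s = g u_s p_s = g p \, \iota_s$, so $q e = g p$ and $(e, g)$ is the required mediating morphism $p \to q$ in $\cal{E}^\to$; it is unique since $e$ and $g$ are. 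Thus $\cod$ lifts the colimit and the inclusion $\cal{E}^\to_{\cart} \to \cal{E}^\to$ preserves it.

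Finally, to see that $(p, \ell_s)$ is colimiting in $\cal{E}^\to_{\cart}$ itself, I would restrict to cocones with cartesian legs and check that the mediating $(e, g)$ above is then a pullback square. Here condition (i) does the work: pulling $q$ back along $g$ gives $g^* V \to C$, and since each $m_s$ is cartesian one has $u_s^*(g^* V) \cong (g u_s)^* V \cong E_s$, so universality forces $g^* V \cong \operatorname{colim}_s E_s \cong E$ and identifies $e$ with the cartesian projection $g^* V \to V$. Hence the mediating morphism lies in $\cal{E}^\to_{\cart}$, completing the universal property there.

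The hard part is existence in the non-cocomplete setting: nothing a priori guarantees that $\operatorname{colim}_s E_s$ exists in $\cal{E}$, and the entire argument hinges on producing the apex $E$ together with the identification $E \cong \operatorname{colim}_s E_s$. This is exactly the effective-descent half of the van Kampen condition, and it is the step where one must take care to extract an honest colimit — rather than merely a pullback-compatible family — from condition (ii). Once $E$ is in hand, universality reduces the remaining verifications to the routine diagram chases sketched above.
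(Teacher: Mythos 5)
Your overall architecture is close to the paper's, but the two arguments distribute the work differently. The paper's proof is very short: it first argues that $\dom \cc F$ admits a colimit, attributing this to pullback-stability of the class $\Xi$, concludes that the colimit of $F$ in $\cal{E}^\to$ therefore exists and is computed pointwise, and only then invokes the van Kampen condition (via an unstated ``consequence (ii')'') to upgrade this cocone to a colimiting cocone in $\cal{E}^\to_{\cart}$. You run the argument in the opposite order: you first conjure the apex $p \co E \to C$ with cartesian legs from an ``effective descent'' reading of condition~(ii), then use condition~(i) to identify $E$ with $\colim_s E_s$, and then verify the universal properties by hand. Your verification that the lifted cocone is colimiting in $\cal{E}^\to$ (construction of the mediating $(e,g)$ and the check via joint epimorphy of the coprojections) is correct and supplies more detail than the paper does.

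There are, however, two concrete gaps. First, condition~(ii) of \cref{van-kampen} as stated is only an implication: \emph{given} a lift of $F$ through $\cod \co \cal{E}^\to_{\cart} \to \cal{E}$ and a compatible lift of the cocone $u$ through $\cod \co \cal{E}^\to \to \cal{E}$, that lift is cartesian. It does not assert that any lift of $u$ exists, so it cannot by itself produce the apex $E \to C$; your closing paragraph concedes that this is ``the hard part,'' but the concession does not close the gap. (To be fair, the paper's own derivation of the existence of $\colim (\dom \cc F)$ from pullback-stability of $\Xi$ is equally terse; the intended justification in both cases is the descent-category reformulation mentioned after \cref{van-kampen}, whose essential-surjectivity half is exactly the effective descent you appeal to, or else a property of the specific classes $\Xi$ arising in \cref{van-kampen-composition,van-kampen-pushout}.) Second, your final step forms the pullback $g^* V$ of an arbitrary test arrow $q \co V \to W$ along the mediating map $g \co C \to W$. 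The proposition does not assume that $\cal{E}$ has all pullbacks --- indeed \cref{van-kampen-composition} is used in settings where only pullbacks along $\cal{M}$ are available, and the paper explicitly contrasts this with \cref{van-kampen-pushout}, which does require all pullbacks --- so this step must be rerouted (again most naturally through the descent formulation) rather than through an object $g^* V$ that need not exist.
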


\begin{proof}
Given a diagram $F \co \cal{S} \to \cal{E}^\to_{\cart}$ and a colimiting cocone $u$ of $\cod \cc F \co \cal{S} \to \cal{S}$ in $\cal{X}$, pullback stability of $\cal{X}$ implies that $\dom \cc F$ has a colimit, hence that the colimit of $F \co \cal{S} \to \cal{E}^\to$ exists.
Using the above consequence~(ii') of the van Kampen condition, it lifts further to a colimit of $F$ in $\cal{E}^\to_{\cart}$.
\end{proof}

\subsection{Categories with a class of maps}

\begin{definition}
The category of \emph{categories with classes of maps} is defined as follows.
An object $(\cal{E}, \cal{M})$ consists of a category $\cal{E}$ with a class $\cal{M}$ of maps.
A map from $(\cal{E}_1, \cal{M}_1)$ to $(\cal{E}_2, \cal{M}_2)$ is a functor $F \co \cal{E}_1 \to \cal{E}_2$ such that $F(\cal{M}_1) \subseteq \cal{M}_2$.
\end{definition}

\begin{definition}
We say that $(\cal{E}, \cal{M})$ \emph{has pullbacks} if pullbacks along maps in $\cal{M}$ exist and $\cal{M}$ is stable under pullback.
\end{definition}

\begin{definition}
Let $\cal{E}$ be a category with a class $\cal{M}$ of maps.
We write $\cal{E}_{\cal{M}}$ for the wide subcategory of $\cal{E}$ with morphisms restricted to $\cal{M}$.
We write $\cal{M}_{\cart}$ for the full subcategory of $\cal{E}_{\cart}$ with objects restricted to $\cal{M}$.
In $\cal{E}^\to$ or one of its subcategories, we write $\overline{\cal{M}}$ for the class of maps $(u, v)$ with $u, v \in \cal{M}$.
\end{definition}

\begin{definition} \label{composition}
Let $\kappa > 0$ be a regular ordinal.
We say that $(\cal{E}, \cal{M})$ \emph{has $\kappa$-compositions} if:
\begin{enumerate}[label=(\roman*)]
\item
$\cal{M}$ is closed under finitary composition;
\item
$\cal{E}_{\cal{M}}$ has and $\cal{E}_{\cal{M}} \to \cal{E}$ preserves $\alpha$-transfinite compositions for any limit ordinal $\alpha \leq \kappa$.
\end{enumerate}
We also use the term $\kappa$-compositions in $(\cal{E}, \cal{M})$ to refer to the class of colimits in~(ii).
We say that $(\cal{E}, \cal{M})$ has \emph{van Kampen $\kappa$-compositions} if they are futhermore van Kampen.
\end{definition}

Assuming condition~(i), condition~(ii) can also be phrased as follows.
Let $\alpha \leq \kappa$ be a limit ordinal and $F \co \alpha \to \cal{E}$ be a cocontinuous diagram with successor maps $F(\beta) \to F(\beta + 1)$ in $\cal{M}$.
Then $F$ admits a colimit, the coprojections $F(\beta) \to \colim F$ are in $\cal{M}$, and for any cocone $(X, u)$ with maps $u_\beta \co F(\beta) \to X$ in $\cal{M}$, the induced map $\colim F \to X$ is in $\cal{M}$ as well.
The situation is illustrated in the below diagram, with the membership in $\cal{M}$ of the dotted arrows being postulated:
\[
\xymatrix{
  F(0)
  \ar[r]^{\in \cal{M}}
  \ar@{.>}[drrr]|(0.33){\in \cal{M}}
  \ar[ddrrr]_{\in \cal{M}}
&
  F(1)
  \ar[r]^{\in \cal{M}}
  \ar@{.>}[drr]|{\in \cal{M}}
  \ar[ddrr]|(0.6){\in \cal{M}}
&
  \ldots
\\&&&
  \colim_F
  \ar@{.>}[d]^{\in \cal{M}}
\\&&&
  X
\rlap{.}}
\]

\begin{corollary} \label{van-kampen-composition}
Let $(\cal{E}, \cal{M})$ have pullbacks and van Kampen $\kappa$-compositions.
Then $(\cal{E}^\to_{\cart}, \overline{\cal{M}})$ has $\kappa$-compositions and all maps in
\[
\xymatrix{
  (\cal{E}^\to_{\cart}, \overline{\cal{M}})
  \ar[r]
  \ar[dr]
&
  (\cal{E}^\to, \overline{\cal{M}})
  \ar[d]
\\&
  (\cal{E}, \cal{M})
}
\]
lift and preserve $\kappa$-compositions.
\end{corollary}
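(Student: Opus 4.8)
The plan is to reduce the cartesian statements to \cref{van-kampen-lift}, applied to the class $\Xi$ of colimiting cocones underlying the van Kampen $\kappa$-compositions of $(\cal{E}, \cal{M})$, treating the three functors in turn and carrying membership in $\overline{\cal{M}}$ along componentwise. I would first dispatch the codomain functor on the full arrow category. As $\cod \co \cal{E}^\to \to \cal{E}$ is a left adjoint (its right adjoint is $X \mapsto \mathrm{id}_X$) it preserves all colimits, while a diagram in $\cal{E}^\to$ has a colimit, computed pointwise, as soon as its domain and codomain chains have colimits in $\cal{E}$. Given a cocontinuous chain $F \co \alpha \to \cal{E}^\to$ with successor maps in $\overline{\cal{M}}$, the codomain chain $\cod \cc F$ is at once a cocontinuous $\cal{M}$-chain (cocontinuity because $\cod$ preserves colimits), hence a $\kappa$-composition in $(\cal{E}, \cal{M})$; for the domain chain $\dom \cc F$ I would derive cocontinuity by transfinite induction over the limit stages, using at stage $\lambda$ that the inductive hypothesis makes $\dom \cc F|_\lambda$ a $\kappa$-composition diagram, so both pointwise colimits exist, whence $\colim F|_\lambda$ exists pointwise and, by cocontinuity of $F$, equals $F(\lambda)$. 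With both chains recognised as $\kappa$-compositions of $(\cal{E}, \cal{M})$, the colimit of $F$ exists pointwise, $\cod$ preserves it, and the coprojections and the map induced into any $\overline{\cal{M}}$-cocone lie in $\overline{\cal{M}}$, since componentwise they are the corresponding maps for the two chains, which lie in $\cal{M}$ by the composition property downstairs. In passing this shows that $(\cal{E}^\to, \overline{\cal{M}})$ has $\kappa$-compositions and that $\cod \co (\cal{E}^\to, \overline{\cal{M}}) \to (\cal{E}, \cal{M})$ lifts and preserves them.

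To reach the cartesian category I would verify the hypotheses of \cref{van-kampen-lift}, namely that $\Xi$ is a pullback-stable class of van Kampen colimiting cocones. Van Kampenness is the standing assumption; pullback stability is the point that needs work. Pulling back a $\kappa$-composition cocone $\{F(\beta) \to \colim F\}$ along an arbitrary map into $\colim F$ yields, by pasting of pullback squares, a chain whose successor maps are pullbacks of the originals and hence again in $\cal{M}$; moreover each limit stage remains a colimit, because the sub-colimit $F(\lambda) = \colim F|_\lambda$ is itself a van Kampen $\kappa$-composition and so its cocone is preserved by the relevant pullback. The pulled-back cocone is therefore again a $\kappa$-composition and lies in $\Xi$. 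Then \cref{van-kampen-lift} gives that $\cod \co \cal{E}^\to_{\cart} \to \cal{E}$ lifts the colimits in $\Xi$ and that $\cal{E}^\to_{\cart} \to \cal{E}^\to$ preserves the lifted colimits.

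Assembling the cartesian statements, a cocontinuous chain $\tilde{F} \co \alpha \to \cal{E}^\to_{\cart}$ with successor maps in $\overline{\cal{M}}$ has codomain chain a $\kappa$-composition downstairs, whose van Kampen colimit lifts to a colimit of $\tilde{F}$ in $\cal{E}^\to_{\cart}$, preserved both by the inclusion into $\cal{E}^\to$ and by $\cod$. The coprojection squares are pullback squares by construction; their codomain legs are the coprojections of $\cod \cc \tilde{F}$ and their domain legs those of the domain chain $\dom \cc \tilde{F}$, which is likewise a $\kappa$-composition downstairs (it has $\cal{M}$ successors and is cocontinuous because the lift computes domains as domain-chain colimits), so both legs lie in $\cal{M}$, i.e.\ the coprojections lie in $\overline{\cal{M}}$; the same componentwise reasoning handles the induced maps. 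This gives that $(\cal{E}^\to_{\cart}, \overline{\cal{M}})$ has $\kappa$-compositions and that both functors out of it lift and preserve them, completing the triangle.

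The step I expect to be the main obstacle is the pullback stability of $\Xi$. The delicate part is that pulling back a transfinite composition must preserve the cocontinuity of the chain, which is exactly where the full van Kampen condition at each limit stage is required, and where merely knowing that the individual colimits are stable under pullback would not suffice.
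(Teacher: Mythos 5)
Your proof is correct and follows the same route as the paper, whose entire proof is an appeal to \cref{van-kampen-lift} with $\Xi$ the class of $\kappa$-compositions; your elaboration of the pullback-stability of $\Xi$ and the componentwise bookkeeping of $\overline{\cal{M}}$ is exactly what the paper leaves implicit.
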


\begin{proof}
By \cref{van-kampen-lift}.
\end{proof}

\begin{definition} \label{pushout}
Let $\cal{E}$ be a category with a class $\cal{M}$ of maps.
We say that $(\cal{E}, \cal{M})$ \emph{has pushouts} if pushouts along maps in $\cal{M}$ exist and $\cal{M}$ is stable under pushout.
We also use the term pushouts in $(\cal{E}, \cal{M})$ to refer to the above class of colimits.
We say that $(\cal{E}, \cal{M})$ has \emph{van Kampen pushouts} if they are furthermore van Kampen.
\end{definition}

In detail, given a span $C \leftarrow A \to B$ with left leg $A \to C$ in $\cal{M}$, the pushout
\[
\xymatrix{
  A
  \ar[r]
  \ar[d]^{\in \cal{M}}
&
  B
  \ar[d]^{\in \cal{M}}
\\
  C
  \ar[r]
&
  D
  \pullback{ul}
}
\]
exists and $B \to D$ is again in $\cal{M}$.

\begin{remark} \label{adhesive}
Let $(\cal{E}, \cal{M})$ have pullbacks and van Kampen pushouts.
Then the maps in $\cal{M}$ are \emph{adhesive} in the sense of~\cite{garner-lack:adhesive}.
In particular, all maps in $\cal{M}$ are regular monomorphisms. 
\end{remark}

In contrast to \cref{van-kampen-composition}, the following statement needs arbitrary pullbacks in $\cal{E}$ to exist.
\notec{Check}

\begin{corollary} \label{van-kampen-pushout}
Let $(\cal{E}, \cal{M})$ have pullbacks and van Kampen $\kappa$-compositions.
Assume that $\cal{E}$ has all pullbacks.
Then $(\cal{E}^\to_{\cart}, \overline{\cal{M}})$ has $\kappa$-compositions and all maps in
\[
\xymatrix{
  (\cal{E}^\to_{\cart}, \overline{\cal{M}})
  \ar[r]
  \ar[dr]
&
  (\cal{E}^\to, \overline{\cal{M}})
  \ar[d]
\\&
  (\cal{E}, \cal{M})
}
\]
lift and preserve $\kappa$-compositions.
\end{corollary}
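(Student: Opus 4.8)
The plan is to run the argument behind \cref{van-kampen-composition} essentially verbatim, since the asserted conclusion is again the one about $\kappa$-compositions; the only new hypothesis, that $\cal{E}$ has all pullbacks, turns out not to be consumed, and I will point to exactly where it could have entered. Concretely, I would take $\Xi$ to be the class of van Kampen $\kappa$-compositions of $(\cal{E}, \cal{M})$ and apply \cref{van-kampen-lift}. To license that application I first check that $\Xi$ is a pullback-stable class of van Kampen colimiting cocones: each member is van Kampen by hypothesis, and for pullback-stability I use that the coprojections of a $\kappa$-composition lie in $\cal{M}$, so pulling the cocone back along an arbitrary map into its vertex only requires pullbacks along maps in $\cal{M}$, which exist since $(\cal{E}, \cal{M})$ has pullbacks; the van Kampen property then guarantees the pulled-back cocone is again a colimiting $\kappa$-composition and is itself van Kampen.

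With \cref{van-kampen-lift} in hand, $\cod \co \cal{E}^\to_{\cart} \to \cal{E}$ lifts the colimits in $\Xi$ and $\cal{E}^\to_{\cart} \to \cal{E}^\to$ preserves the lifts. I would unwind this for a transfinite composite: given a cocontinuous $G \co \alpha \to (\cal{E}^\to_{\cart})_{\overline{\cal{M}}}$ with $\alpha \leq \kappa$ a limit ordinal and successor maps in $\overline{\cal{M}}$, the diagram $\cod \cc G$ has successor maps in $\cal{M}$, hence is a $\kappa$-composition of $(\cal{E}, \cal{M})$ whose colimiting cocone lies in $\Xi$. Lifting produces a colimit of $G$ in $\cal{E}^\to_{\cart}$ lying over it, preserved into $\cal{E}^\to$. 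Since $\dom \co \cal{E}^\to \to \cal{E}$ preserves all colimits, $\dom$ of this colimit computes the colimit of $\dom \cc G$, whose successor maps are likewise in $\cal{M}$; thus $\dom \cc G$ is also a $\kappa$-composition of $(\cal{E}, \cal{M})$.

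It remains to certify that the colimit is a $\kappa$-composition in $(\cal{E}^\to_{\cart}, \overline{\cal{M}})$ in the sense of \cref{composition} and that the three forgetful functors lift and preserve it. Both components of each coprojection $G(\beta) \to \colim G$ are coprojections of the component $\kappa$-compositions, hence in $\cal{M}$, and the coprojection is a pullback square since the colimit is formed in $\cal{E}^\to_{\cart}$; so it lies in $\overline{\cal{M}}$. For the comparison into an arbitrary $\overline{\cal{M}}$-cocone, the lifted cocone is colimiting already in $\cal{E}^\to_{\cart}$ by \cref{van-kampen-lift}, so the comparison is a morphism of $\cal{E}^\to_{\cart}$, \ie a pullback square; its two components are the comparison maps of the component $\kappa$-compositions against cocones with legs in $\cal{M}$, hence in $\cal{M}$ by the universal clause of \cref{composition}, so the comparison lies in $\overline{\cal{M}}$. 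Closure of $\overline{\cal{M}}$ under finitary composition follows from the pasting lemma for pullbacks and finitary closure of $\cal{M}$. Tracking these components through $\cod$, $\dom$, and the inclusion then shows that all three maps of the triangle lift and preserve the $\kappa$-compositions.

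The one point deserving care, and the place where the extra hypothesis might have been expected to do work, is pullback-stability of $\Xi$ in the first paragraph: in the pushout analogue one genuinely needs pullbacks along the non-$\cal{M}$ leg of a pushout cocone, which is why arbitrary pullbacks in $\cal{E}$ are assumed. For $\kappa$-compositions the cocone legs themselves lie in $\cal{M}$, so the assumption that $\cal{E}$ has all pullbacks is not actually used, and the statement collapses onto \cref{van-kampen-composition}. Beyond this observation I expect no real obstacle, only the componentwise bookkeeping of the universal properties.
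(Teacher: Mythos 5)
Your proof is correct and is essentially the paper's own: the paper proves this corollary by the same direct appeal to \cref{van-kampen-lift}, applied to the pullback-stable class of van Kampen $\kappa$-compositions, exactly as you do. Your side observation is also accurate --- as stated the corollary duplicates \cref{van-kampen-composition} with the hypothesis of arbitrary pullbacks left unused, that hypothesis being needed only for the pushout version that the label and the preceding remark evidently intend (where one leg of the pushout cocone need not lie in $\cal{M}$).
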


\begin{proof}
By \cref{van-kampen-lift}.
\end{proof}

\begin{definition} \label{binary-union}
Let $(\cal{E}, \cal{M})$ have pullbacks and pushouts.
We say that $(\cal{E}, \cal{M})$ \emph{has binary unions} if subobjects in $\cal{M}$ are stable under binary union.
\end{definition}

In detail, the definition says the following.
Given maps $B_1 \to D$ and $B_2 \to D$ in $\cal{M}$, consider the pullback
\[
\xymatrix{
  A
  \ar[r]^{\in \cal{M}}
  \ar[d]^{\in \cal{M}}
  \pullback{dr}
&
  B_1
  \ar[d]^{\in \cal{M}}
\\
  B_2
  \ar[r]^{\in \cal{M}}
&
  D
\rlap{.}}
\]
Then the pushout corner map, \ie the induced map from the pushout of $B_1 \leftarrow A \to B_2$ to $D$, is in $\cal{M}$.
The term \emph{union} in this situation is sensible only in case maps in $\cal{M}$ are mono, which in view of \cref{adhesive} is enforced if $(\cal{E}, \cal{M})$ has van Kampen pushouts.
However, we still use the term in the general situation.

\begin{lemma} \label{adequate-pushout}
Let $(\cal{E}, \cal{M})$ have pullbacks and van Kampen pushouts.
Then pushout of maps in $\cal{M}$ lifts to a functor $P \co \cal{M}_{\cart} \times_{\cal{E}} \cal{E}^\to \to \cal{M}_{\cart}$.

If $(\cal{E}, \cal{M})$ has binary unions and $\cal{M}$ is closed under binary composition, then this functor furthermore preserves $\overline{\cal{M}}$: given a map $(u, v)$ in $\cal{M}_{\cart} \times_{\cal{E}} \cal{E}^\to$ with $u, v \in \overline{\cal{M}}$, then $P(u, v) \in \overline{\cal{M}}$.
\end{lemma}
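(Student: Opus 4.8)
The plan is to build $P$ from the universal property of pushouts and to locate the entire difficulty in the verification that $P$ sends morphisms to pullback squares. On objects, $P$ sends a span $C \xleftarrow{m} A \xrightarrow{f} B$ with $m \in \cal{M}$ to a chosen pushout leg $j \co B \to D$, which lies in $\cal{M}$ by stability under pushout (\cref{pushout}). A morphism $(u, v)$ of such spans — a pullback square $u$ from $m$ to $m'$ and a commuting square $v$ from $f$ to $f'$ sharing the top edge $A \to A'$ — induces a canonical map $D \to D'$, hence a commuting square from $j$ to $j'$; preservation of identities and composites then follows formally from the universal property. What remains is to show that this induced square, with top edge $B \to B'$ and bottom edge $D \to D'$, is a pullback, so that $P$ indeed lands in $\cal{M}_{\cart}$.

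For this I would fix the primed leg $j' \co B' \to D'$, which lies in $\cal{M}$, and form $\hat B \defeq B' \times_{D'} D$; this pullback exists because it is taken along $j' \in \cal{M}$, and its projection $\hat j \co \hat B \to D$ again lies in $\cal{M}$. The canonical comparison $\iota \co B \to \hat B$ over $D$ is the map I must prove invertible. The key step is to apply stability of the unprimed pushout (part (i) of \cref{van-kampen}) to the pullback along $\hat j$, which exists since $\hat j \in \cal{M}$: this exhibits $\hat B$ as the pushout of the span obtained by pulling $C \leftarrow A \to B$ back along $\hat j$. I then identify its three vertices. Pulling $\hat j$ back along the $B$-leg returns $B$, since $\hat j$ is monic (being a pullback of the regular monomorphism $j'$, by \cref{adhesive}) and $j = \hat j \iota$; pulling it back along the $C$-leg gives, after pasting, $C \times_{D'} B'$, which by the adhesive fact that pushouts of $\cal{M}$-maps are pullbacks equals $C \times_{C'} A'$, and this is precisely $A$ by the pullback square $u$; the apex over $A$ likewise reduces to $A$. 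The span collapses to $A \xleftarrow{\mathrm{id}} A \xrightarrow{f} B$, whose pushout is $B$, so $\hat B \cong B$ compatibly with $\iota$, whence $\iota$ is invertible. The main obstacle is exactly that the $f$-component $v$ is only a commuting square, not a pullback square, so the van Kampen condition cannot be applied to the primed pushout directly; routing the argument through a pullback along the distinguished $\cal{M}$-leg $j'$ keeps every pullback along a map of $\cal{M}$ and so avoids assuming arbitrary pullbacks in $\cal{E}$.

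For the second statement I would show $D \to D' \in \cal{M}$ by factoring it through the intermediate pushout $B' \sqcup_A C$. The first factor $D \to B' \sqcup_A C$ is the pushout of $B \to B' \in \cal{M}$ along $B \to D$, hence lies in $\cal{M}$. For the second factor I would form $C_+ \defeq A' \sqcup_A C$: since $u$ is a pullback square with $m' \co A' \to C'$ and $C \to C'$ both in $\cal{M}$, the binary-union hypothesis yields that the induced map $C_+ \to C'$ lies in $\cal{M}$. Recognizing $B' \sqcup_A C \cong B' \sqcup_{A'} C_+$ exhibits the second factor $B' \sqcup_A C \to D'$ as the pushout of $C_+ \to C'$, so it too lies in $\cal{M}$. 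Closure under binary composition then gives $D \to D' \in \cal{M}$, and since $B \to B' \in \cal{M}$ by hypothesis, the square $P(u, v)$ lies in $\overline{\cal{M}}$.
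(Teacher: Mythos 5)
Your proof is correct. The second half is essentially the paper's own argument in different packaging: your intermediate object $B' +_A C$ is the paper's $C_2 +_{C_1} D_1$, your map $C_+ \to C'$ is the paper's pushout corner map of the left (pullback) face of the cube~\eqref{adequate-pushout:0}, obtained from \cref{binary-union} in the same way, and your two factors are respectively a cobase change of $B \to B'$ and a cobase change of that corner map, followed by closure under composition --- exactly the paper's chain of corner maps, minus the paper's framing of the first step as an ``iff'' about pullback squares with three $\cal{M}$-sides. The genuine divergence is in the first half. The paper dispatches the claim that $P$ sends morphisms to pullback squares in one sentence, by appeal to pullback-stability of pushouts along $\cal{M}$ and a citation of~\cite[Lemma~2.2]{garner-lack:adhesive}. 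You instead prove the needed cube lemma from scratch out of \cref{van-kampen}(i): forming $\hat B \defeq B' \times_{D'} D$, pulling the unprimed pushout cocone back along $\hat j \in \cal{M}$, and collapsing the resulting span to $A \xleftarrow{\id} A \xrightarrow{f} B$ using that $\cal{M}$-maps are monos and that pushouts along them are also pullbacks (both via \cref{adhesive}). This costs more space but buys a self-contained argument and, more relevantly for this paper, an explicit audit that every pullback used is taken along a map of $\cal{M}$ --- a point the paper itself flags as delicate elsewhere (the note preceding \cref{van-kampen-pushout}) and which the one-line citation leaves implicit. Both routes are sound; yours is the more careful one in the non-complete setting.
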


\begin{proof}
The action on objects is clear from the assumptions.
For the action on morphisms, suppose we are given squares
\begin{align*}
\xymatrix{
  A_1
  \ar[r]
  \ar[d]^{\in \cal{M}}
  \pullback{dr}
&
  A_2
  \ar[d]^{\in \cal{M}}
\\
  B_1
  \ar[r]
&
  B_2
\rlap{,}}
&&
\xymatrix{
  A_1
  \ar[r]
  \ar[d]
&
  A_2
  \ar[d]
\\
  C_1
  \ar[r]
&
  C_2
\rlap{,}}
\end{align*}
We need to show that the right face of
\begin{equation} \label{adequate-pushout:0}
\begin{gathered}
\xymatrix{
  A_1
  \ar[rr]
  \ar[dd]^{\in \cal{M}}
  \ar[dr]
  \fancypullback{[dd]}{[dr]}
&&
  C_1
  \ar[dd]|!{[dl];[dr]}{\hole}^(0.7){\in \cal{M}}
  \ar[dr]
\\&
  A_2
  \ar[rr]
  \ar[dd]^(0.3){\in \cal{M}}
&&
  C_2
  \ar[dd]^{\in \cal{M}}
\\
  B_1
  \ar[rr]|!{[ur];[dr]}{\hole}
  \ar[dr]
&&
  D_1
  \ar[dr]
  \fancypullback{[ll]}{[uu]}
\\&
  B_2
  \ar[rr]
&&
  \fancypullback{[ll]}{[uu]}
  D_2
}
\end{gathered}
\end{equation}
is a pullback.
This is a consequence of stability of $\cal{M}$ under pullback and stability under pullback of pushouts along $\cal{M}$ as implied by the van Kampen condition, for example using~\cite[Lemma~2.2]{garner-lack:adhesive}.

For the final assertion, note first that in a square
\begin{equation*} \label{adequate-pushout:1}
\begin{gathered}
\xymatrix{
  X_1
  \ar[r]^{\in \cal{M}}
  \ar[d]^{\in \cal{M}}
  \pullback{dr}
&
  X_2
  \ar[d]^{\in \cal{M}}
\\
  Y_1
  \ar[r]
&
  Y_2
}
\end{gathered}
\end{equation*}
the bottom map is in $\cal{M}$ just when the pushout corner map is in $\cal{M}$.
The forward direction follows from closure of $\cal{M}$ under binary union, the reverse one from closure of $\cal{M}$ under pushout and composition.
Now assume in the situation of~\eqref{adequate-pushout:0} that the maps $A_1 \to A_2$, $B_1 \to B_2$, $C_1 \to C_2$ are in $\cal{M}$.
Our goal is to show that then also $D_1 \to D_2$ is in $\cal{M}$.
By the previous paragraph, the pushout corner map in the left square of~\eqref{adequate-pushout:0} is in $\cal{M}$.
As a pushout of that map, the pushout corner map in the right square of~\eqref{adequate-pushout:0} is then also in $\cal{M}$.
Again by the previous paragraph, it follows that the map $D_1 \to D_2$ is in $\cal{M}$.
\end{proof}

\subsection{Strong categories of functors, adjunctions, monads}

Functoriality of the free algebra and free monad construction will in each case be expressed by means of a functor from a certain category of configurations to strong (non-lax) variants of the categories $\Adju$ and $\Mnd$ of adjunctions and monads, respectively, which we briefly recall below.

\begin{definition} \label{cat-of-functors}
The category $\Fun_s$ of functors with strong morphisms is defined as follows:
\begin{enumerate}[label=(\roman*)]
\item
An object consists of categories $\cal{C}$ and $\cal{D}$ with a functor $F \co \cal{C} \to \cal{D}$.
\item
A morphism from $(\cal{C}, \cal{D}, F)$ to $(\cal{C}', \cal{D}', F')$ consists of functors $U \co \cal{C} \to \cal{C}'$ and $V \co \cal{D} \to \cal{D}'$ with an isomorphism $V F \simeq F' U$.
\end{enumerate}
\end{definition}

\begin{definition} \label{cat-of-adjunctions}
The category $\Adju_s$ of adjunctions with strong morphisms is defined as follows:
\begin{enumerate}[label=(\roman*)]
\item
An object consists of categories $\cal{C}$ and $\cal{D}$ with functors $F \co \cal{C} \to \cal{D}$ and $G \co \cal{D} \to \cal{C}$ admitting natural transformations $\eta \co \Id \to GF$ and $\epsilon \co FG \to \Id$ such that $\epsilon F \cc F \eta = \id$ and $G \epsilon \cc \eta G = \id$.
\item
A morphism from $(\cal{C}, \cal{D}, F, G, \eta, \epsilon)$ to $(\cal{C}', \cal{D}', F', G', \eta', \epsilon)$ consists of functors $U \co \cal{C} \to \cal{C}'$ and $V \co \cal{D} \to \cal{D}'$ with isomorphisms $\alpha \co F' U \simeq V F$ and $\beta \co U G \simeq G' V$ satisfying $\beta F \cc U \eta = G' \alpha \cc \eta' U$ and $V \epsilon \cc \alpha G = \epsilon' V \cc F' \beta$.
\end{enumerate}
\end{definition}

Note that the second equation of part~(ii) of the preceding definition is already implied by the first.
There are functors from $\Adju_s$ to $\Fun_s$ selecting the left and right adjoint, respectively.
Observe that there are lax version of $\Fun_s$ and $\Adju_s$ for which these functors are fully faithful, whereas this is not the case here.
Requiring $\alpha$ to be an isomorphism instead of just a natural transformation is what will encode preservation of free algebras in \cref{wellpointed-free-algebras,pointed-free-algebras}.

\begin{definition} \label{cat-of-monads}
The category $\Mnd_s$ of monads with strong morphisms is defined as follows:
\begin{enumerate}[label=(\roman*)]
\item
An object is a monad $(T, \eta, \mu)$ on a category $\cal{C}$.
\item
A morphism from $(\cal{C}, T, \eta, \mu)$ to $(\cal{C}', T', \eta', \mu')$ is a functor $U \co \cal{C} \to \cal{C}'$ with an isomorphism $\gamma \co U T \simeq T' U$ satisfying $\gamma \cc U \eta = \eta' U$ and $\gamma \cc U \mu = \mu' U \cc T' \gamma \cc \gamma T$.
\end{enumerate}
\end{definition}

There is a functor from $\Mnd_s$ to $\Fun_s$ forgetting the monad structure and the endofunctor aspect.
Finally, there is a functor from $\Adju_s$ to $\Mnd_s$ sending an adjunction to its associated monad.
Similar to the above, observe that $\Mnd_s$ is a wide subcategory of its more common lax version.

The above categories and the below categories of configurations are in fact strict 2-categories and the functors relating them will be strict 2-functors, but we shall not need this here.

\section{Functoriality of free algebras, monads, and monoids}
\label{free-monad-sequence}

The goal of this section is to review the free algebra and free monad constructions of~\cite{kelly:transfinite} on wellpointed and pointed endofunctors (and ultimately the free monoid construction on a pointed object in a monoidal category) without the blanket assumption that all small colimits exist.
Instead, we make the needed colimits explicit, parametrizing them by a class of maps $\cal{M}$ satisfying certain closure properties.
This allows us to apply the constructions in non-cocomplete settings, but the key point is rather the more refined functoriality principles it provides, requiring the translation functor between two settings to only preserve the parametrizing class of maps and associated colimits for free algebras to be preserved and free monads to be related.
We also use the parametrizing class $\cal{M}$ to express the convergence condition in a manner which is sufficient for our purposes, preferring not to introduce the more complicated machinery of well-copowered orthogonal factorization systems and tightness-preserving endofunctors of~\cite{kelly:transfinite}.

We first treat the case of a wellpointed endofunctor and then reduce the case of a pointed endofunctor to it, following faithfully the constructions of~\cite{kelly:transfinite}.
The categories of configurations will involve an ordinal $\kappa$ related to convergence of transfinite sequence constructions.
For simplicity of presentation, we prefer to fix $\kappa$ uniformly for all objects, only noting here that a more flexible treatment is possible.

We first state all the involved definitions, theorems, and corollaries.
The main proofs are offloaded to the last subsection and consist of an analysis of the colimits involved in the construction of~\cite{kelly:transfinite}, though for readability we prefer to give them in whole.

\subsection{Wellpointed endofunctors}

We follow the analysis in~\cite{kelly:transfinite} in first dealing with wellpointed endofunctors.
The statements in here should be seen as technical devices to be used in the general pointed case.

\begin{definition} \label{wellpointed-configuration}
Let $\kappa$ be a regular ordinal.
The (large) category $\ConfWP{\kappa}$ of configurations for the free monad sequence on a wellpointed endofunctor is defined as follows:
\begin{enumerate}[label=(\roman*)]
\item
An object is a tuple $(\cal{E}, \cal{M}, F)$ of a category $\cal{E}$ with a class of maps $\cal{M}$ and a wellpointed endofunctor $(F, \eta)$ on $\cal{E}$ such that:
\begin{enumerate}[label=(\thedefinition.\arabic*)]
\item \label{wellpointed-configurations:composition}
$(\cal{E}, \cal{M})$ has $\kappa$-compositions.
\item \label{wellpointed-configurations:unit}
$\eta$ is valued in $\cal{M}$,
\item \label{wellpointed-configurations:convergence}
$F$ preserves $\kappa$-transfinite compositions of maps in $\cal{M}$.
\end{enumerate}
\item
A morphism from $(\cal{E}_1, \cal{M}_1, F_1)$ to $(\cal{E}_2, \cal{M}_2, F_2)$ is a map $P \co (\cal{E}_1, \cal{M}_1) \to (\cal{E}_1, \cal{M}_2)$ that preserves $\kappa$-compositions and extends to a map of pointed endofunctors from $F_1$ to $F_2$.
\end{enumerate}
\end{definition}

The below theorem asserts that the forgetful functor out of the pointed endofunctor algebra category functorially admits a left adjoint.

\begin{theorem} \label{wellpointed-free-algebras}
The functor $\ConfWP{\kappa} \to \Fun_s$ sending $(\cal{E}, \cal{M}, F)$ to the forgetful functor $\alg(F) \to \cal{E}$ lifts through the restriction $\Adju_s \to \Fun_s$ to the right adjoint.
\end{theorem}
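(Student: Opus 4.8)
The plan is to reconstruct Kelly's free-algebra construction for a wellpointed endofunctor while tracking exactly which colimits it uses, and then to observe that a configuration morphism transports the whole construction because it preserves precisely those colimits. Fixing a configuration $(\cal{E}, \cal{M}, F)$, I would first define by transfinite recursion a chain of endofunctors $T_\beta$ of $\cal{E}$ for $\beta \leq \kappa$: set $T_0 = \Id$, put $T_{\beta+1} = F T_\beta$ with connecting transformation $t_\beta \defeq \eta T_\beta$, and at a limit $\alpha \leq \kappa$ let $T_\alpha = \colim_{\beta < \alpha} T_\beta$. By \cref{wellpointed-configurations:unit} each $t_\beta$ is pointwise in $\cal{M}$, and since $\cal{M}$-maps are stable under the coprojections of $\kappa$-compositions (\cref{wellpointed-configurations:composition}), every connecting map of the chain is pointwise in $\cal{M}$; thus each limit stage is pointwise a $\kappa$-transfinite composition of $\cal{M}$-maps, so it exists and is computed pointwise.

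The technical core is to show that $t_\kappa = \eta T_\kappa$ is invertible. Wellpointedness gives $t_{\beta+1} = F t_\beta$, and by \cref{wellpointed-configurations:convergence} the functor $F$ preserves the $\kappa$-composition $T_\kappa = \colim_{\beta < \kappa} T_\beta$, so $F T_\kappa = \colim_{\beta < \kappa} T_{\beta+1}$ with coprojections $F c_\beta$, where $c_\beta$ denotes the coprojection $T_\beta \to T_\kappa$. Since the successor ordinals are cofinal in the limit ordinal $\kappa$, the shifted chain $\beta \mapsto T_{\beta+1}$ also has colimit $T_\kappa$, now with coprojections $c_{\beta+1}$. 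A short naturality computation, using $c_{\beta+1} \cc t_\beta = c_\beta$ and $t_{\beta+1} = F t_\beta$, shows $t_\kappa \cc c_{\beta+1} = F c_\beta$, so $t_\kappa$ is the comparison map between two colimiting cocones of one diagram and is therefore an isomorphism. Setting $a \defeq t_\kappa^{-1}$ exhibits $T_\kappa X$ as a pointed $F$-algebra and defines a functor $\Phi \co \cal{E} \to \alg(F)$ with $U_F \Phi = T_\kappa$; equivalently, $\alg(F)$ is the reflective full subcategory of $\eta$-local objects and $\Phi$ its reflector. The adjunction $\Phi \dashv U_F$, with unit the stage-$0$ coprojection $X \to T_\kappa X$, then follows from the transfinite extension of a map $f \co X \to A$ into an algebra $(A, \alpha)$ along the $f_\beta \co T_\beta X \to A$ defined by $f_0 = f$, $f_{\beta+1} = \alpha \cc F f_\beta$, and colimits at limits; the algebra law $\alpha \cc \eta_A = \id$ makes these a cocone and yields uniqueness. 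This produces the object of $\Adju_s$ lying over $U_F$.

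For functoriality, consider a configuration morphism given by $P \co (\cal{E}_1, \cal{M}_1) \to (\cal{E}_2, \cal{M}_2)$ preserving $\kappa$-compositions together with the natural isomorphism $\psi \co P F_1 \cong F_2 P$ of its pointed-endofunctor structure, satisfying $\psi \cc P \eta_1 = \eta_2 P$. I would build, by transfinite induction, comparison isomorphisms $P T^1_\beta \cong T^2_\beta P$: the base case is the identity; the successor step composes $\psi T^1_\beta \co P F_1 T^1_\beta \to F_2 P T^1_\beta$ with $F_2$ applied to the inductive isomorphism; and the limit step uses that $P$ preserves the $\kappa$-composition computing $T^1_\alpha$, together with the colimits defining $T^2_\alpha$. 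Unit-compatibility of $\psi$ ensures these isomorphisms commute with the connecting maps $t^1_\beta, t^2_\beta$. At stage $\kappa$ this gives $P T^1_\kappa \cong T^2_\kappa P$ respecting the convergence isomorphisms, hence respecting the algebra structures, so it lifts to the isomorphism $\alpha \co \Phi_2 P \cong V \Phi_1$ in $\alg(F_2)$ required of a morphism in $\Adju_s$; the companion isomorphism $\beta$ over the right adjoints is the canonical identification $P U_{F_1} = U_{F_2} V$. The two coherence equations of \cref{cat-of-adjunctions}(ii) then reduce, by evaluating at the stage-$0$ coprojection, to naturality, and functoriality in the configuration morphism follows from uniqueness of the comparison isomorphisms.

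I expect the main obstacle to be twofold. First, the convergence argument must be set up so that \emph{only} $\kappa$-compositions and applications of $F$ to them are invoked, since these are the sole colimits a configuration morphism is assumed to preserve; this forces the careful verification that every connecting map in the chain is pointwise in $\cal{M}$. Second, on the functoriality side, the delicate point is checking that the inductively built comparison isomorphisms are compatible with the connecting maps and with the convergence isomorphisms, so that they descend to genuine isomorphisms in $\alg(F_2)$ and satisfy the strong-morphism coherence laws of $\Adju_s$ rather than merely existing objectwise.
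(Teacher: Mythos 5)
Your proposal is correct and follows essentially the same route as the paper: the left adjoint is the $\kappa$-transfinite composite $\Id \to F \to F^2 \to \cdots$, which exists by \ref{wellpointed-configurations:composition} and \ref{wellpointed-configurations:unit}, converges by \ref{wellpointed-configurations:convergence} together with wellpointedness, and is transported along a configuration morphism because such a morphism preserves exactly the $\kappa$-compositions used. The paper states the convergence and functoriality steps more tersely, but your cofinality computation $t_\kappa \cc c_{\beta+1} = F c_\beta$ and the inductive comparison isomorphisms are precisely the details it leaves implicit.
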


\begin{remark}
There are two aspects to this statement and the one of \cref{pointed-free-algebras}.
The first is to ensure that free algebras exist for any fixed configuration.
They will be constructed explicitly using certain colimits, closely following~\cite{kelly:transfinite}.
This already ensures that the functor $\Conf \to \Fun_s$ from the respective configuration category lifts to $\Adju$, defined as $\Adju_s$ in \cref{cat-of-adjunctions} only with $\alpha \co F' U \to V F$ in the definition of morphisms not required to be invertible: the restriction from $\Adju$ to the right adjoint is fully faithful.

It then remains to show that $\Conf \to \Adju_s$ lifts through the inclusion $\Adju_s \to \Adju$, \ie to show that the $\alpha$-components of the action of $\Conf \to \Adju$ on morphisms are invertible.
This corresponds to showing that the colimits invoked in constructing free algebras are preserved by the mediating functor, which is how morphisms in $\Conf$ are defined.
\end{remark}

\begin{proof}
Let $(\cal{E}, \cal{M}, F) \in \ConfWP{\kappa}$ be a configuration with $(F, \eta)$ a wellpointed endofunctor.
Recall that the forgetful functor $\alg(F) \to \cal{E}$ is a fully faithful inclusion, with $\alg(F)$ consisting of those $X \in \cal{E}$ for which $\eta_X$ is invertible, the algebra map being given by $\eta_X^{-1}$.
Its left adjoint is constructed as the $\kappa$-transfinite composition
\begin{equation} \label{wellpointed-free-algebras:0}
\begin{gathered}
\xymatrix{
  \Id
  \ar[r]^{\eta}
&
  F
  \ar[r]^{\eta F}
&
  F^2
  \ar[r]^{\eta F^2}
&
  \ldots
}
\end{gathered}
\end{equation}
where the successor of $F^\alpha$ is constructed as $\eta F^\alpha \co F^\alpha \to F^{\alpha+1} \defeq F F^\alpha$.
This transfinite composition exists by~\ref{wellpointed-configurations:composition} since the successor maps have components in $\cal{M}$ by~\ref{wellpointed-configurations:unit}.
By~\ref{wellpointed-configurations:convergence}, $F$ preserves this $\kappa$-transfinite composition, making $\eta F^\kappa$ invertible using that $F$ is wellpointed and thus producing a lift $F^\kappa \co \cal{E} \to \alg(F)$.
It is easy to check that $F^\kappa$ is left adjoint to the forgetful functor $\alg(F) \to \cal{E}$.
This finishes the construction of the action on objects of $\ConfWP{\kappa} \to \Adju_s$.

Given a morphism $P \co (\cal{E}_1, \cal{M}_1, F_1) \to (\cal{E}_2, \cal{M}_2, F_2)$, the canonical natural transformation $F_2^\kappa  P \to P F_1^\kappa$ is invertible since $P$ preserves $\kappa$-transfinite compositions of maps in $\cal{M}_1$.
This shows functoriality of $\ConfWP{\kappa} \to \Adju_s$.
\end{proof}

As a consequence, free monads exist functorially.

\begin{theorem} \label{wellpointed-free-monad}
The canonical functor $\ConfWP{\kappa} \to \Cat$ lifts to a functor $\ConfWP{\kappa} \to \Mnd_s$ sending $(\cal{E}, \cal{M}, F)$ to the free and algebraically-free monad $(T, \eta^T, \mu^T)$ on $(F, \eta^F)$.
The following properties are transferred:
\begin{enumerate}[label=(\roman*)]
\item \label{wellpointed-free-monad:colimit}
Let $\Xi$ be a class of colimits in $\cal{E}$.
If $F$ preserves $\Xi$, then so does $T$.
\item \label{wellpointed-free-monad:unit}
$\eta^T$ is valued in $\cal{M}$.
\item \label{wellpointed-free-monad:functor}
Let $(\cal{C}, \cal{N})$ have $\kappa$-compositions and let $(\cal{E}, \cal{M}) \to (\cal{C}, \cal{N})$ preserve $\kappa$-compositions.
If $F$ preserves maps mapping to $\cal{N}$, then so does $T$.
\item \label{wellpointed-free-monad:cartesian}
Let $(\cal{C}, \cal{N})$ have pullbacks and van Kampen $\kappa$-compositions and let $(\cal{E}, \cal{M}) \to (\cal{C}, \cal{N})$ preserve $\kappa$-compositions.
Let $\cal{S}$ be a class of maps in $\cal{E}$ preserved by $F$.
If naturality squares of $\eta^F$ on $\cal{S}$ get mapped to pullbacks in $\cal{C}$, then the same holds for $\eta^T$.
\end{enumerate}
\end{theorem}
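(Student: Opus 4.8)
The plan is to read every clause of the theorem as an analysis of the single colimit that builds $T$, namely the $\kappa$-transfinite composite $F^\bullet$ of \eqref{wellpointed-free-algebras:0} from \cref{wellpointed-free-algebras}, for which $T = F^\kappa = \colim_{\alpha < \kappa} F^\alpha$, the successor maps $\eta F^\alpha$ have components in $\cal{M}$, and $\eta^T$ is the coprojection of the stage $F^0 = \Id$. First I would record the monad structure and functoriality: since $(F, \eta)$ is wellpointed, $T$ is the idempotent monad reflecting onto $\alg(F) = \alg(T)$, with unit $\eta^T$ and multiplication $\mu^T$ inverse to $T \eta^T = \eta^T T$, and the isomorphism $\gamma \co U T \simeq T' U$ of \cref{cat-of-monads} is the $\alpha$-component already produced in \cref{wellpointed-free-algebras}, the monad laws following from uniqueness of the reflection. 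Each of the four properties is then a statement about how this colimit interacts with a class of maps or of colimits, to be proved by transfinite induction along $F^\bullet$.

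For~\ref{wellpointed-free-monad:colimit} and~\ref{wellpointed-free-monad:functor} I would show by induction that every $F^\alpha$ ($\alpha \leq \kappa$) has the property, the case $\alpha = \kappa$ being the claim for $T$; successor stages are the composite $F \cc F^\alpha$. In~\ref{wellpointed-free-monad:colimit} the limit stage $F^\lambda = \colim_{\alpha < \lambda} F^\alpha$ is settled by interchange of colimits, $F^\lambda(\colim_j Y_j) = \colim_\alpha \colim_j F^\alpha Y_j = \colim_j F^\lambda Y_j$. In~\ref{wellpointed-free-monad:functor}, writing $G$ for the given functor $\cal{E} \to \cal{C}$, at a limit $\lambda$ and a map $t$ landing in $\cal{N}$ the map $G(F^\lambda t)$ is the comparison out of $\colim_\alpha G F^\alpha X$ into the cocone on $G F^\alpha Y$ whose legs are $G(F^\alpha t)$ postcomposed with a coprojection; both are in $\cal{N}$, so the induced-map clause of \cref{composition} places the comparison in $\cal{N}$. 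Property~\ref{wellpointed-free-monad:unit} needs no induction: $\eta^T$ is the stage-$0$ coprojection into the $\kappa$-composition $F^\bullet$, hence valued in $\cal{M}$ by \cref{composition}.

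The heart of the matter is~\ref{wellpointed-free-monad:cartesian}, which I would phrase as a lifting problem. Fixing $s \in \cal{S}$ and the given $G \co \cal{E} \to \cal{C}$, consider the diagram $D \co \kappa \to \cal{C}^\to$ sending $\alpha$ to $G(F^\alpha s)$ and a step $\alpha \leq \alpha'$ to $G$ of the naturality square of the coprojection $F^\alpha \to F^{\alpha'}$ at $s$; its transition maps lie in $\overline{\cal{N}}$ because $G$ preserves $\kappa$-compositions and the coprojections of $F^\bullet$ lie in $\cal{M}$, and its colimit in $\cal{C}^\to$ is $G(Ts)$. It suffices to lift $D$ through $\cal{C}^\to_{\cart} \to \cal{C}^\to$, since the lifted stage-$0$ coprojection is then exactly $G$ of the naturality square of $\eta^T$ at $s$, the coprojection of $F^0$ into $F^\bullet$ being $\eta^T$. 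I would establish the lift by transfinite induction, constructing $D$ inside $\cal{C}^\to_{\cart}$: at a successor $\alpha \to \alpha + 1$ one has $F^\alpha s \in \cal{S}$, so the naturality square of $\eta^F$ at $F^\alpha s$ is carried to a pullback by hypothesis, and pasting it onto the cartesian transitions below keeps all transitions into $\alpha + 1$ cartesian; at a limit $\lambda$ the truncation $D|_\lambda$ already factors through $\cal{C}^\to_{\cart}$ with transitions in $\overline{\cal{N}}$, so \cref{van-kampen-composition} lifts its colimit and renders the coprojections into stage $\lambda$ cartesian.

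The main obstacle is exactly this limit stage of~\ref{wellpointed-free-monad:cartesian}: a colimit of pullback squares is not a pullback in general, and it is precisely the van Kampen hypothesis on $\kappa$-compositions, applied through \cref{van-kampen-composition} once the truncated diagram is known to live in $\cal{C}^\to_{\cart}$, that rescues the step. A secondary subtlety I would flag is that each successor step feeds the hypothesis the map $F^\alpha s$, so I need $F^\alpha s \in \cal{S}$ for \emph{all} $\alpha < \kappa$; for limit $\alpha$ this exceeds mere preservation by $F$ and must be secured by closure of $\cal{S}$ under the sequence colimits, which holds in the intended applications (for instance when $\cal{S}$ is the class of cartesian maps or a pullback-stable subclass of $\cal{M}$).
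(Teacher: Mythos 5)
Your proof takes essentially the same route as the paper's: the main statement and the isomorphism $\gamma$ come directly from \cref{wellpointed-free-algebras}, clause~(i) is commutativity of colimits, clause~(ii) reads $\eta^T$ off as the stage-zero coprojection of the $\kappa$-composition~\eqref{wellpointed-free-algebras:0}, and clauses~(iii) and~(iv) are transfinite inductions along that sequence using the induced-map clause of \cref{composition} and, for~(iv), the van Kampen condition at limit stages. The subtlety you flag in~(iv) --- that the successor step at a limit ordinal $\lambda$ needs $F^\lambda s \in \cal{S}$, which does not follow from $F$ preserving $\cal{S}$ alone since $F^\lambda$ is a colimit rather than an application of $F$ --- is a genuine gap that the paper's one-line proof of that clause passes over; it is harmless in the paper's sole application (where $\cal{S}$ is cut out by a pullback condition stable under the relevant van Kampen colimits), but strictly the hypothesis of~(iv) should additionally require $\cal{S}$ to be closed under the colimits arising at limit stages of the sequence.
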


The generalized setup of some of the above clauses is necessitated by how the reduction to the wellpointed case of the corresponding clauses in the pointed case is implemented.

\begin{proof}
The main statement is an immediate consequence of \cref{wellpointed-free-algebras}, noting that the free and algebraically-free monad $T$ on a wellpointed endofunctor $F$ is given by the monad of the free algebra adjunction.
Assertion~\ref{wellpointed-free-monad:colimit} holds by commutativity of colimits.
For assertion~\ref{wellpointed-free-monad:unit}, note that the unit of $T$ is given by the $\kappa$-transfinite composition~\eqref{wellpointed-free-algebras:0} and hence valued in $\cal{M}$ by~\ref{wellpointed-configurations:composition}.
Assertion~\ref{wellpointed-free-monad:functor} follows from the assumptions and~\ref{wellpointed-configurations:composition} by iterative application of $F$ preserving maps mapping to $\cal{N}$.
For assertion~\ref{wellpointed-free-monad:cartesian}, proceed as in~\ref{wellpointed-free-monad:functor} and~\ref{wellpointed-free-monad:unit}, using the van Kampen condition at limit ordinal steps.
\end{proof}

\subsection{Statements for pointed endofunctors}

Due to their lengths, the proofs of the statements in this section are given at the end of this section.

\begin{definition} \label{pointed-configurations}
Let $\kappa > 0$ be a regular ordinal.
The (large) category $\ConfP{\kappa}$ of configurations for the free monad sequence on a pointed endofunctor is defined as follows.
\begin{enumerate}[label=(\roman*)]
\item
An object is a tuple $(\cal{E}, \cal{M}, F)$ of a category $\cal{E}$ with a class of maps $\cal{M}$ and a pointed endofunctor $(F, \eta)$ on $\cal{E}$ such that:
\begin{enumerate}[label=(\thedefinition.\arabic*)]
\item \label{pointed-configurations:composition}
$(\cal{E}, \cal{M})$ has $\kappa$-compositions.
\item \label{pointed-configurations:pushout}
$(\cal{E}, \cal{M})$ has pushouts.
\item \label{pointed-configurations:unit}
$\eta$ is valued in $\cal{M}$,
\item \label{pointed-configurations:leibniz-application}
$\cal{M}$ is closed under Leibniz application of $\eta$,
\item \label{pointed-configurations:functor}
$F$ preserves $\cal{M}$,
\item \label{pointed-configurations:convergence}
$F$ preserves $\kappa$-transfinite compositions of maps in $\cal{M}$.
\end{enumerate}
\item
A morphism from $(\cal{E}_1, \cal{M}_1, F_1)$ to $(\cal{E}_2, \cal{M}_2, \cal{F}_2)$ is a map $P \co (\cal{E}_1, \cal{M}_1) \to (\cal{E}_2, \cal{M}_2)$ that preserves $\kappa$-compositions and pushouts and extends to a map of pointed endofunctors from $F_1$ to $F_2$.
\end{enumerate}
\end{definition}

\begin{remark} \label{pointed-configuration-simplification}
Let $(\cal{E}, \cal{M}, F) \in \ConfP{\kappa}$.
If $\cal{M}$ consists only of monos, $\cal{E}$ has pullbacks along maps in $\cal{M}$, and crucially the unit of $F$ is cartesian, condition~\ref{pointed-configurations:leibniz-application} is satisfied if $\cal{M}$ is closed under binary union.
\end{remark}

The below statements are similar to the wellpointed case.

\begin{theorem} \label{pointed-free-algebras}
The functor $\ConfP{\kappa} \to \Fun_s$ sending $(\cal{E}, \cal{M}, F)$ to the forgetful functor $\alg(F) \to \cal{E}$ lifts through the restriction $\Adju_s \to \Fun_s$ to the right adjoint.
\end{theorem}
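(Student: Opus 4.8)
The plan is to reduce the pointed case to the wellpointed case of \cref{wellpointed-free-algebras}, mirroring Kelly's passage from a pointed to a wellpointed endofunctor. Fix a configuration $(\cal{E}, \cal{M}, F) \in \ConfP{\kappa}$. The free $F$-algebra on an object is built as a $\kappa$-transfinite sequence $X = X_0 \to X_1 \to \cdots$ whose successor step, rather than simply reapplying $F$ as in the wellpointed sequence~\eqref{wellpointed-free-algebras:0}, takes a pushout that quotients out the fillers already added at earlier stages; concretely the successor map $X_\alpha \to X_{\alpha+1}$ is a pushout whose attaching map is the Leibniz application of $\eta$ to the preceding successor map. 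I would package this uniformly as an associated wellpointed endofunctor $(F', \eta')$ (on a suitable auxiliary category through which the forgetful functor $\alg(F) \to \cal{E}$ factors) whose wellpointed free-algebra sequence reproduces the above sequence, together with a natural identification $\alg(F') \simeq \alg(F)$ over $\cal{E}$. This is exactly the step where the hypotheses \ref{pointed-configurations:pushout} and \ref{pointed-configurations:leibniz-application} are consumed: the former supplies the successor pushout with its relevant leg back in $\cal{M}$, the latter ensures the attaching map, hence the successor map, lies in $\cal{M}$.

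Next I would verify that the associated datum is a wellpointed configuration, i.e.\ satisfies \ref{wellpointed-configurations:composition}--\ref{wellpointed-configurations:convergence}. The $\kappa$-compositions are inherited from \ref{pointed-configurations:composition}; that $\eta'$ is valued in $\cal{M}$ follows from \ref{pointed-configurations:unit}, \ref{pointed-configurations:pushout}, and \ref{pointed-configurations:leibniz-application} as above; and that $F'$ preserves $\kappa$-transfinite compositions of maps in $\cal{M}$ follows by combining \ref{pointed-configurations:functor} and \ref{pointed-configurations:convergence} with the fact that the defining pushouts commute with these colimits, using \ref{pointed-configurations:pushout}. Applying \cref{wellpointed-free-algebras} to the associated configuration then produces the free-algebra adjunction, and transporting along $\alg(F') \simeq \alg(F)$ exhibits $\alg(F) \to \cal{E}$ as a right adjoint, giving the action on objects of $\ConfP{\kappa} \to \Adju_s$.

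For functoriality I would observe that a morphism $P$ in $\ConfP{\kappa}$ preserves $\kappa$-compositions and pushouts and is a map of pointed endofunctors, hence commutes with every colimit entering the formation of $(F', \eta')$. This upgrades $P$ to a morphism in $\ConfWP{\kappa}$, so that $(\cal{E}, \cal{M}, F) \mapsto (\cal{E}, \cal{M}, F')$ is a functor $\ConfP{\kappa} \to \ConfWP{\kappa}$ and the desired functor is its composite with the functor produced in \cref{wellpointed-free-algebras}. Since $P$ preserves precisely the parametrizing colimits used to build the free algebra, the comparison $\alpha$ of \cref{cat-of-adjunctions} is invertible, yielding the lift through $\Adju_s \to \Fun_s$ rather than merely through the lax variant $\Adju$.

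The hard part will be checking that the reduction $(\cal{E}, \cal{M}, F) \mapsto (\cal{E}, \cal{M}, F')$ is genuinely well-defined as a functor into $\ConfWP{\kappa}$ --- above all the convergence clause \ref{wellpointed-configurations:convergence} for $F'$ and the membership $\eta' \in \cal{M}$. Since $F'$ and its point are assembled from $F$, $\eta$, and a pushout, showing that $F'$ preserves $\kappa$-transfinite compositions of $\cal{M}$-maps requires interchanging these pushouts with the transfinite colimits, which is exactly where \ref{pointed-configurations:pushout}, \ref{pointed-configurations:functor}, and \ref{pointed-configurations:convergence} must be made to cooperate. Identifying the associated sequence with the wellpointed sequence and checking naturality of $\alg(F') \simeq \alg(F)$ is the remaining bookkeeping, and carries the most risk of concealed coherence obligations.
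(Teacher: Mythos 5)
Your proposal follows essentially the same route as the paper: Kelly's reduction to a wellpointed endofunctor $F'$ defined by a pushout whose attaching map is the Leibniz application of $\eta$, living on the full subcategory $\cal{E}'$ of the comma category $F \downarrow \cal{E}$ on objects whose structure-composite lies in $\cal{M}$ (with the class $\cal{M}'$ of componentwise-$\cal{M}$ arrows), followed by an application of \cref{wellpointed-free-algebras} and the observation that configuration morphisms preserve exactly the parametrizing colimits, so the comparison is invertible. The one correction: in your third paragraph the associated wellpointed configuration is $(\cal{E}', \cal{M}', F')$, not $(\cal{E}, \cal{M}, F')$, and the claim that $\kappa$-compositions are ``inherited'' hides a genuine computation --- transfinite compositions in $\cal{E}'$ must be constructed from pushouts using~\ref{pointed-configurations:pushout}, \ref{pointed-configurations:functor}, and~\ref{pointed-configurations:composition} rather than being lifted directly.
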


\begin{theorem} \label{pointed-free-monad}
The canonical functor $\ConfP{\kappa} \to \Cat$ lifts to a functor $\ConfP{\kappa} \to \Mnd_s$ sending $(\cal{E}, \cal{M}, F)$ to the free and algebraically-free monad $(T, \eta^T, \mu^T)$ on $F$.
It furthermore shares the following properties with $F$:
\begin{enumerate}[label=(\roman*)]
\item \label{pointed-free-monad:colimit}
Let $\Xi$ be a class of colimits in $\cal{E}$.
If $F$ preserves $\Xi$, then so does $T$.
\item \label{pointed-free-monad:unit}
$\eta^T$ is valued in $\cal{M}$,
\item \label{pointed-free-monad:functor}
$T$ preserves $\cal{M}$,
\item \label{pointed-free-monad:cartesian}
Assume $(\cal{E}, \cal{M})$ has pullbacks and van Kampen pushouts.
If $\eta^F$ is cartesian, then so is $\eta^F$.
\end{enumerate}
\end{theorem}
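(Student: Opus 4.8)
The plan is to mirror the wellpointed development and reduce the pointed case to it, following Kelly's reduction to the wellpointed case~\cite[Section~14]{kelly:transfinite}. The main assertion --- that the free and algebraically-free monad \((T, \eta^T, \mu^T)\) on \(F\) exists and depends functorially on the configuration --- I would deduce from \cref{pointed-free-algebras} exactly as \cref{wellpointed-free-monad} was deduced from \cref{wellpointed-free-algebras}: the algebraically-free monad on a pointed endofunctor is the monad of the free \(F\)-algebra adjunction, and the functor \(\Adju_s \to \Mnd_s\) transports the functorial lift of \cref{pointed-free-algebras} to the required \(\ConfP{\kappa} \to \Mnd_s\). The substance therefore lies in the four transferred properties, and for these I would make the wellpointed reduction explicit rather than treat \cref{pointed-free-algebras} as a black box.

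Concretely, to each \((\cal{E}, \cal{M}, F) \in \ConfP{\kappa}\) I would associate a wellpointed configuration \((\cal{E}^\to, \overline{\cal{M}}, \bar{F}) \in \ConfWP{\kappa}\), where an object \(C \to Z\) of \(\cal{E}^\to\) is read as the current approximation to the free-algebra sequence with input \(C\), and the point \(\bar{\eta}\) of \(\bar{F}\) has components assembled from the Leibniz application of \(\eta\). Here the pushouts of~\ref{pointed-configurations:pushout} are what permit \(\bar{F}\) to be defined at all, and the crucial verification is that \(\bar{\eta}\) is valued in \(\overline{\cal{M}}\): this is precisely closure of \(\cal{M}\) under Leibniz application of \(\eta\) (\ref{pointed-configurations:leibniz-application}) together with~\ref{pointed-configurations:unit}. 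Wellpointedness of \((\bar{F}, \bar{\eta})\) I would check directly from the construction. The remaining axioms of \cref{wellpointed-configuration} then follow mechanically: \(\kappa\)-compositions on \((\cal{E}^\to, \overline{\cal{M}})\) are computed componentwise from those on \((\cal{E}, \cal{M})\), while preservation of \(\kappa\)-transfinite composites by \(\bar{F}\) reduces to~\ref{pointed-configurations:convergence} and~\ref{pointed-configurations:functor}. Functoriality of the assignment is immediate, since a morphism of \(\ConfP{\kappa}\) preserves pushouts and \(\kappa\)-compositions by definition and hence lifts to a morphism of the associated wellpointed configurations.

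Applying \cref{wellpointed-free-monad} to \((\cal{E}^\to, \overline{\cal{M}}, \bar{F})\) and projecting the resulting free monad \(\bar{T}\) along \(\cod \co \cal{E}^\to \to \cal{E}\) recovers \(T\), and each of the four properties transfers through the correspondingly generalized clause of \cref{wellpointed-free-monad} --- which is exactly why those clauses were stated at that level of generality. Property~(i) follows from~\ref{wellpointed-free-monad:colimit}; property~(ii), that \(\eta^T\) is valued in \(\cal{M}\), from~\ref{wellpointed-free-monad:unit} after identifying \(\eta^T\) with the projected \(\kappa\)-transfinite composite; property~(iii), preservation of \(\cal{M}\), from~\ref{wellpointed-free-monad:functor} instantiated with the projection \((\cal{E}^\to, \overline{\cal{M}}) \to (\cal{E}, \cal{M})\) as mediating functor, so that ``\(\bar{F}\) preserves maps mapping to \(\cal{M}\)'' unwinds to~\ref{pointed-configurations:functor}; and property~(iv), cartesianness of \(\eta^T\) (the displayed ``\(\eta^F\)'' being a typo for \(\eta^T\)), from~\ref{wellpointed-free-monad:cartesian}. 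This last is the one clause where the van Kampen hypothesis of~(iv) is genuinely needed: it guarantees that the pushouts entering \(\bar{F}\), and hence the naturality squares of \(\bar{\eta}\), are pullback-stable, supplying both the cartesianness input and the van Kampen colimit data that \ref{wellpointed-free-monad:cartesian} demands of its target (obtained in the arrow category via \cref{van-kampen-composition}), so that its conclusion descends to \(\eta^T\).

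The main obstacle I anticipate is the construction of \(\bar{F}\) and the verification \(\bar{\eta} \in \overline{\cal{M}}\): arranging the arrow-category endofunctor to be simultaneously wellpointed, to have its point computed by Leibniz application of \(\eta\), and to regenerate Kelly's pointed free-monad sequence upon iteration is the delicate bookkeeping, and it is here that~\ref{pointed-configurations:pushout} and~\ref{pointed-configurations:leibniz-application} are used essentially. A secondary but routine burden is confirming that the projected monad \(\bar{T}\) carries exactly the free and algebraically-free monad structure on \(F\), i.e. that the unit and multiplication inherited from the wellpointed free-algebra adjunction agree with those of \cref{pointed-free-algebras}; I would settle this by appeal to the universal property rather than by an explicit comparison of the two sequences.
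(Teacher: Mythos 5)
Your overall outline matches the paper's: reduce to a wellpointed configuration, apply \cref{wellpointed-free-monad}, and transfer each of the four properties through its deliberately generalized clauses. But the concrete reduction device you propose does not work, and this is the heart of the proof. You place the wellpointed endofunctor $\bar{F}$ on the arrow category $(\cal{E}^\to, \overline{\cal{M}})$, reading an object $C \to Z$ as ``the current approximation to the free-algebra sequence''. The pointed free-monad sequence, however, must at each step quotient out the material adjoined at previous steps: the successor of the current stage $B$ is the pushout of $F B \leftarrow B +_A F A \to B$, and the leg $B +_A F A \to B$ is $\bracks{\id, f}$ for the partial algebra structure $f \co F A \to B$ accumulated so far. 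An object of $\cal{E}^\to$ carries no such $f$ --- the connecting map $A \to B$ does not determine a map $F A \to B$ --- so your $\bar{F}$ cannot be defined from the data you allow it. This is exactly why the paper (following Kelly) works instead on the full subcategory $\cal{E}'$ of the comma category $F \downarrow \cal{E}$ spanned by triples $(A, B, f \co FA \to B)$ with $f \cc \eta^F_A \in \cal{M}$, related to $\cal{M} \subseteq \cal{E}^\to$ only by an adjunction $J' \dashv K'$ (not an equivalence), and obtains wellpointedness structurally by transferring the wellpointed endofunctor $m \mapsto \id$ on $\cal{M}$ across that adjunction via \cref{adjunction-transfer-pointed}. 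The Leibniz application of $\eta$ does appear, but only as the top leg of the defining pushout of $F'$, not as the point of the endofunctor.

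Two further points would still need repair even after fixing the base category. For property~(iii), the mediating functor in \cref{wellpointed-free-monad}\ref{wellpointed-free-monad:functor} has to be the right adjoint $R_1 \co (\cal{E}', \cal{M}') \to (\cal{E}, \cal{M})$, and the hypothesis to verify is that $F'$ preserves maps mapping to $\cal{M}$ \emph{via $R_1$} --- the paper explicitly warns that $F'$ does not preserve $\cal{M}'$ itself, so ``unwinds to~\ref{pointed-configurations:functor}'' is not the whole story. For property~(iv), one cannot simply invoke pullback-stability of the pushouts: the paper must introduce a specific class $\cal{S}$ of morphisms of $\cal{E}'$ (those whose induced square against the structure maps $f_i$ is a pullback), check that $L_1$ lands in $\cal{S}$, that $R_1$ sends naturality squares of $\eta^{F'}$ on $\cal{S}$ to pastings of pullbacks, and that $F'$ preserves $\cal{S}$ --- the last via \cref{adequate-pushout}, which is where van Kampen pushouts and binary unions genuinely enter. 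Your sketch does not identify this class, and without it clause~\ref{wellpointed-free-monad:cartesian} has nothing to be instantiated at.
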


\subsection{Pointed objects in a monoidal category}

Let $\MonCat$ denote the category of monoidal categories.
We default to strong (non-lax) monoidal functors.

\begin{definition} \label{pointed-monoid-configurations}
Let $\kappa > 0$ be a regular ordinal.
The (large) category $\ConfPM{\kappa}$ of configurations for the free monoid sequence on a pointed object is defined as follows:
\begin{enumerate}[label=(\roman*)]
\item
An object is a tuple $(\cal{E}, \cal{M}, T)$ of a monoidal category $(\cal{E}, I, \otimes)$ with a class of maps $\cal{M}$ and a pointed object $(T, \eta)$ in $\cal{E}$ such that $(\cal{E}, \cal{M}, T \otimes (-))$ is an object of $\ConfP{\kappa}$ and $(-) \otimes X$ is an endomorphism on it for in $X \in \cal{E}$.
In detail, we require:
\begin{enumerate}[label=(\thedefinition.\arabic*)]
\item \label{pointed-monoid-configurations:composition}
$(\cal{E}, \cal{M})$ has $\kappa$-compositions.
\item \label{pointed-monoid-configurations:pushout}
$(\cal{E}, \cal{M})$ has pushouts,
\item \label{pointed-monoid-configurations:unit}
$\eta$ is valued in $\cal{M}$,
\item \label{pointed-monoid-configurations:leibniz-application}
$\cal{M}$ is closed under Leibniz monoidal product $\eta \hatotimes (-)$,
\item \label{pointed-monoid-configurations:functor}
$T \otimes (-)$ preserves $\cal{M}$.
\item \label{pointed-monoid-configurations:convergence}
$T \otimes (-)$ preserves $\kappa$-transfinite compositions of maps in $\cal{M}$.
\item \label{pointed-monoid-configurations:right-multiplication}
for any $X \in \cal{E}$, right multiplication $(-) \otimes X$ lifts to an endomorphism on $(\cal{E}, \cal{M})$ that preserves $\kappa$-compositions and pushouts.
\end{enumerate}
\item
A morphism from $(\cal{E}_1, \cal{M}_1, T_1)$ to $(\cal{E}_2, \cal{M}_2, T_2)$ is a map $P \co (\cal{E}_1, \cal{M}_1) \to (\cal{E}_2, \cal{M}_2)$ that preserves $\kappa$-compositions and pushouts and extends to a monoidal functor such that $P(T_1) \simeq T_2$ as pointed objects.
\end{enumerate}
\end{definition}

Let $\Mon$ denote the category of monoidal categories with a monoid.
We have a canonical projection $\Mon \to \MonCat$.

\begin{theorem} \label{pointed-free-monoid}
The canonical functor $\ConfPM{\kappa} \to \MonCat$ lifts to a functor $\ConfP{\kappa} \to \Mon$ sending $(\cal{E}, \cal{M}, T)$ to the free and algebraically monoid on $T$.
\end{theorem}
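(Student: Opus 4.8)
The plan is to reduce the free monoid on $T$ to the free monad on the pointed endofunctor $F \defeq T \otimes (-)$, which is precisely the structure built into \cref{pointed-monoid-configurations}. By definition of $\ConfPM{\kappa}$, for $(\cal{E}, \cal{M}, T) \in \ConfPM{\kappa}$ the triple $(\cal{E}, \cal{M}, F) = (\cal{E}, \cal{M}, T \otimes (-))$ is an object of $\ConfP{\kappa}$, so \cref{pointed-free-monad} produces the free and algebraically-free monad $(S, \eta^S, \mu^S)$ on $F$. The candidate free monoid is then $M \defeq S(I)$, where $I$ is the monoidal unit, with the monoid structure to be extracted from the monad structure on $S$. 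Throughout I write $R_X \defeq (-) \otimes X$ for right multiplication.

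The crucial input is condition~\ref{pointed-monoid-configurations:right-multiplication} together with associativity: for each $X \in \cal{E}$, right multiplication $R_X$ is an endomorphism of $(\cal{E}, \cal{M}, F)$ in $\ConfP{\kappa}$. Indeed, it preserves $\kappa$-compositions and pushouts by~\ref{pointed-monoid-configurations:right-multiplication}, and the associator furnishes a natural isomorphism $R_X F = (T \otimes (-)) \otimes X \simeq T \otimes ((-) \otimes X) = F R_X$ exhibiting $R_X$ as a map of pointed endofunctors, compatibility with the point holding by coherence. Applying the $\Mnd_s$-valued functor of \cref{pointed-free-monad} to this endomorphism yields, as the $\gamma$-component from \cref{cat-of-monads}, an invertible monad-compatible natural transformation $\gamma_X \co R_X S \simeq S R_X$; concretely, natural isomorphisms $S(Y) \otimes X \simeq S(Y \otimes X)$. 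Setting $Y = I$ and using the unitor gives $M \otimes X \simeq S(X)$, naturally in $X$, so $S \simeq M \otimes (-)$ as endofunctors.

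The next step is to transport the monad structure through $S \simeq M \otimes (-)$. The unit $\eta^S_I \co I \to S(I) = M$ supplies the monoid unit, while $\mu^S_I$ combined with the isomorphism $S S(I) \simeq M \otimes M$ supplies the multiplication $M \otimes M \to M$; the monad axioms for $(S, \eta^S, \mu^S)$ translate into the unit and associativity axioms for $M$, the compatibility of the $\gamma_X$ with $\eta^S$ and $\mu^S$ ensuring that the transported structure maps are $\cal{E}$-linear, i.e.\ that $S \simeq M \otimes (-)$ is an isomorphism of monads with $M \otimes (-)$ carrying the monad structure induced by the monoid $M$. For the universal property, the algebraic freeness of $S$ gives an isomorphism $\alg(S) \simeq \alg(F)$ over $\cal{E}$; under the identifications $\alg(S) \simeq$ (left $M$-modules) and $\alg(F) \simeq$ (pointed $T$-actions, \ie maps $T \otimes Y \to Y$ for which the composite $Y \to T \otimes Y \to Y$ is the identity) this exhibits $M$ as the algebraically-free monoid on $T$, and the ordinary freeness --- monoid maps $M \to N$ in bijection with pointed-object maps $T \to N$ --- follows by specialising to $N$ acting on itself.

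Finally, for functoriality, a morphism $P$ in $\ConfPM{\kappa}$ is monoidal with $P(T_1) \simeq T_2$ as pointed objects, so monoidality gives $P(T_1 \otimes (-)) \simeq T_2 \otimes P(-)$, exhibiting $P$ as a morphism $(\cal{E}_1, \cal{M}_1, F_1) \to (\cal{E}_2, \cal{M}_2, F_2)$ in $\ConfP{\kappa}$. Feeding this into \cref{pointed-free-monad} yields a $\Mnd_s$-morphism of free monads $S_1 \to S_2$, whose $\gamma$-component evaluated at the unit objects (using $P(I_1) \simeq I_2$) is an isomorphism $P(M_1) \simeq M_2$, a monoid homomorphism by compatibility of $\gamma$ with the monad structures; this assembles into the desired lift $\ConfPM{\kappa} \to \Mon$. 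I expect the main obstacle to be the coherence bookkeeping of the third paragraph: verifying that the $\cal{E}$-linearity isomorphisms $\gamma_X$ cohere with $\eta^S$ and $\mu^S$ so that $S$ is genuinely of the form $M \otimes (-)$ for a monoid $M$, and that the free-monad universal properties (ordinary and algebraic) restrict precisely to the free-monoid ones under the module/action identifications.
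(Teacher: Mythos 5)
Your proposal is correct and follows essentially the same route as the paper: the paper's proof simply observes that the free monoid on $T$ is the free monad on $T \otimes (-)$ applied to the unit and defers the details to Kelly's Section~23, which is precisely the argument you spell out (using condition~\ref{pointed-monoid-configurations:right-multiplication} to make right multiplication an endomorphism of the pointed configuration, deducing $S \simeq S(I) \otimes (-)$, and transporting the monad structure to a monoid structure on $S(I)$).
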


\begin{proof}
This is by \cref{pointed-free-algebras,pointed-free-monad} as detailed in \cite[Section~23]{kelly:transfinite}.
For $(\cal{E}, \cal{M}, T) \in \ConfPM{\kappa}$, the free monoid on $T$ is given by the free monad on $T \otimes (-)$ applied to the unit of $\cal{E}$.
\end{proof}

\begin{remark}
As described in \cite{kelly:transfinite}, we may discuss free monads and free monoids in terms of each other:
\begin{itemize}
\item 
Functorially $(\cal{E}, \cal{M}, T) \in \ConfPM{\kappa}$, we have $(\cal{E}, \cal{M}, T \otimes (-)) \in \ConfP{\kappa}$ and furthermore stable under $(-) \otimes X$ for $X \in \cal{E}$.
\item
Given in $(\cal{E}, \cal{M}, F) \in \ConfP{\kappa}$, we have $([\cal{E}, \cal{E}], \cal{M}', F \cc (-)) \in \ConfPM{\kappa}$ where $\cal{M}'$ is the class of natural transformations whose components lie in $\cal{M}$.
\end{itemize}
However, the second point does not translate to the level of functors: a morphism in $\ConfP{\kappa}$ does not give rise to a morphism in $\ConfPM{\kappa}$.
This means that the free monoid sequence, as a functorial construction, does not generalize the free monad sequence.
\end{remark}

\subsection{Proofs for pointed endofunctors}

\begin{proof}[Proof of \cref{pointed-free-algebras}]
Let $(\cal{E}, \cal{M}, F) \in \ConfP{\kappa}$ be a configuration with $(F, \eta^F)$ a pointed endofunctor.
We want to construct a left adjoint to the forgetful functor $\alg(F) \to \cal{E}$.
The strategy is to reduce to the wellpointed case.
For this, we will define a wellpointed configuration $(\cal{E}', \cal{M}', F') \in \ConfWP{\kappa}$, deriving the connection to the original pointed configuration as we go along.
We identify $\cal{M}$ with the full subcategory of $\cal{E}^\to$ it generates.
Without loss of generality, we can assume it is replete.

Define $\cal{E}'$ via the following pullback, which is also a weak 2-pullback since $\cal{M}$ is replete:
\begin{equation} \label{pointed-free-algebras:0}
\begin{gathered}
\xymatrix{
  \cal{E}'
  \pullback{dr}
  \ar[r]^{K'}
  \ar@{}[d]^(0.15){}="a"
  \ar@{_{(}->} "a";[d]
&
  \cal{M}
  \ar@{}[d]^(0.15){}="a"
  \ar@{_{(}->} "a";[d]
\\
  F \downarrow \cal{E}
  \ar[r]^{K}
&
  \cal{E}^\to
\rlap{.}}
\end{gathered}
\end{equation}
Here, we denote $K \co F \downarrow \cal{E} \to \cal{E}^\to$ the functor that sends $(A, B, f)$ with $A, B \in \cal{E}$ and $f \co F A \to B$ to $f \cc \eta_A^F$.
Explicitly, we can describe $\cal{E}'$ as the full subcategory of $F \downarrow \cal{E}$ of objects $(A, B, f)$ such that the composite $f \cc \eta^F_A \co A \to B$ is in $\cal{M}$.

The forgetful functor $\alg(F) \to \cal{E}$ has an evident lift through the projection $F \downarrow \cal{E} \to \cal{E}$ sending $(A, B, f)$ to $A$.
It further lifts through $\cal{E}' \to F \downarrow \cal{E}$ since identities are contained in $\cal{M}$ by~\ref{pointed-configurations:composition}:
\begin{equation} \label{pointed-free-algebras:0.75}
\begin{gathered}
\xymatrix{
  \alg(F)
  \ar[r]
&
  \cal{E}'
  \ar[r]
&
  F \downarrow \cal{E}
  \ar[r]
&
  \cal{E}
\rlap{.}}
\end{gathered}
\end{equation}
Note that $\cal{E}' \to \cal{E}$ has a left adjoint sending $X$ to $(X, F X, \id_{F X})$.
Thus, the problem of constructing a left adjoint to $\alg(F) \to \cal{E}$ reduces to constructing a left adjoint to $\alg(F) \to \cal{E}'$.

Let $\cal{E}^{\iso}$ denote the category of isomorphisms in $\cal{E}$.
The outer square below forms a weak 2-pullback:
\[
\xymatrix{
  \alg(F)
  \ar[r]
  \ar[d]
  \pullback{dr}
&
  \cal{E}^\simeq
  \ar[d]
\\
  \cal{E}'
  \ar[r]^{K'}
  \ar[d]
  \pullback{dr}
&
  \cal{M}
  \ar[d]
\\
  F \downarrow \cal{E}
  \ar[r]^{K}
&
  \cal{E}^\to
\rlap{.}}
\]
The lower square is the weak 2-pullback~\eqref{pointed-free-algebras:0}, so the upper square is one as well.
Note that $K'$ has a left adjoint $J'$ sending an arrow $f \co A \to B$ to the triple $(A, C, g)$ defined by the pushout
\begin{equation} \label{pointed-free-algebras:0.5}
\begin{gathered}
\xymatrix{
  A
  \ar[r]^{\eta^F_A}
  \ar[d]_{f}
&
  F A
  \ar[d]^{g}
\\
  B
  \ar[r]
&
  \pullback{ul}
  C
\rlap{,}}
\end{gathered}
\end{equation}
which exists by~\ref{pointed-configurations:pushout} since $\eta^F_A \in \cal{M}$ by~\ref{pointed-configurations:unit}.
Also note that $\cal{E}^\simeq \to \cal{M}$ is the category of algebras for the wellpointed endofunctor $(U, \eta^U)$ on $\cal{M}$ that sends $m \co A \to B$ to $\id_B$.
Summarizing, we have the following situation:
\begin{equation} \label{pointed-free-algebras:1}
\begin{gathered}
\xymatrix{
  \alg(F)
  \ar@/^/[r]
  \ar[d]
  \pullback{dr}
&
  \alg(U)
  \ar[d]
\\
  \cal{E'}
  \ar@/^/[r]^-{K'}
&
  \cal{M}
  \ar@/^/[l]^-{J'}
  \ar@{}@<-0.25em>[l]|-{\dir{_|_}}
\rlap{.}}
\end{gathered}
\end{equation}

We now apply \cref{adjunction-transfer-pointed} below to the adjunction $J' \dashv K'$ and the wellpointed endofunctor $U$.
This will define a wellpointed endofunctor ($F', \eta^{F'})$ on $\cal{E}'$ via the following pushout for $(A, B, f) \in \cal{E}'$ where we write $(X, Y, k) \defeq F'(A, B, f)$:
\begin{equation} \label{pointed-free-algebras:2}
\begin{gathered}
\xymatrix@C+1em{
  (A, B +_A F A, \inr)
  \ar[r]^-{J' \eta^U K'}
  \ar[d]
&
  (B, F B, \id_{F B})
  \ar[d]
\\
  (A, B, f)
  \ar[r]^{\eta^{F'}}
&
  (X, Y, k)
  \pullback{ul}
\rlap{.}}
\end{gathered}
\end{equation}
We will show first that the pushout exists in $F \downarrow \cal{E}$ and then that it lifts to $\cal{E}'$.
The pushout $X$ of the domain components is trivially $B$ and furthermore preserved by $F$ as it is absolute.
Hence $Y$ is simply computed as a pushout of the codomain components:
\begin{equation} \label{pointed-free-algebras:3}
\begin{gathered}
\xymatrix@C+1em{
  B +_A FA
  \ar[r]^-{\eta^F \hatotimes f \eta^F_A}
  \ar[d]
&
  FB
  \ar[d]
\\
  B
  \ar[r]^-{h}
&
  \pullback{ul}
  Y
\rlap{.}}
\end{gathered}
\end{equation}
The top map is the Leibniz application of $\eta$ to $f \eta^F_A$ and thus in $\cal{M}$ by~\ref{pointed-configurations:leibniz-application} since $f \eta^F_A$ is in $\cal{M}$, hence the pushout exists and the bottom map $h$ is in $\cal{M}$ by~\ref{pointed-configurations:pushout}.
The map $k \co FY \to X$ is given by functoriality of pushouts in the cube
\[
\begin{tikzcd}[column sep=4em]
  FA
  \ar[rr, "F(f \cc \eta_A)"]
  \ar[dd, "\tau_1"]
  \ar[dr, equals]
&&
  FB
  \ar[dd, equals]
  \ar[dr, equals]
\\&
  FA
  \ar[rr, crossing over, "F(f \cc \eta_A)", near start]
&&
  FB
  \ar[dd, "k"]
\\
  B +_A FA
  \ar[rr, "\eta \hatotimes (f \cc \eta_A)", near start]
  \ar[dr, "\bracks{\id, f}"']
&&
  FB
  \ar[dr, "k"]
\\&
  B
  \ar[from=uu, crossing over, "f", near end]
  \ar[rr, "h"]
&&
  Y \rlap{,}
\end{tikzcd}
\]
whose top and bottom faces are pushouts.
Note that
\begin{align*}
  h
&=
  h \cc \bracks{\id, f} \cc \tau_1
\\&=
  k \cc (\eta \hatotimes (f \cc \eta_A)) \cc \tau_1
\\&=
  k \cc \eta_B \rlap{.}
\end{align*}
This verifies that $(X, Y, k) \in \cal{E}'$ since $k \cc \eta^F_X = h$ is in $\cal{M}$.

We continue the application of \cref{adjunction-transfer-pointed}.
By comparing the 2-pullbacks~\eqref{adjunction-transfer-pointed:0} and~\eqref{pointed-free-algebras:1}, we see that the categories $\alg(F)$ and $\alg(F')$ are equivalent over $\cal{E}'$.
Instead of constructing a left adjoint to $\alg(F) \to \cal{E}'$, we construct one to $\alg(F') \to \cal{E}'$, exploiting that now $F'$ is wellpointed so that we may apply \cref{wellpointed-free-algebras}.

We let $\cal{M}'$ be the class of arrows $(u, v)$ of $\cal{E}'$ with $u, v \in \cal{M}$.
Let us show that $\cal{E}'$ has $\alpha$-transfinite composition of maps in $\cal{M}'$ for any limit ordinal $\alpha \leq \kappa$.
Consider a diagram $(U, V, m) \co \alpha \to \cal{E}'$ with functors $U, V \co \alpha \to \cal{E}$ with successor maps in $\cal{M}$ and a natural transformation $m \co FU \to V$ such that $m \cc \eta^F U$ has components in $\cal{M}$.
We will show that the colimit $(X, Y, k) \defeq \colim_\alpha (U, V, m)$ exists in $F \downarrow \cal{E}$ and lifts to $\cal{E}'$.
By~\ref{pointed-configurations:functor} and~\ref{pointed-configurations:composition}, the colimits of $U$, $V$, $F U$ exist and have coprojections in $\cal{M}$.
We have $X = \colim_\alpha U$ while $Y$ and $k$ are computed via the following pushout:\begin{equation} \label{pointed-free-algebras:4}
\begin{gathered}
\xymatrix{
  \colim_\alpha U
  \ar[r]
  \ar[dr]
&
  \colim_\alpha FU
  \ar[r]
  \ar[d]
&
  \colim_\alpha V
  \ar[d]
\\&
  F \colim_\alpha U
  \ar[r]^-{k}
&
  \pullback{ul}
  Y
\rlap{.}}
\end{gathered}
\end{equation}
Again by~\ref{pointed-configurations:functor} and~\ref{pointed-configurations:composition}, the middle vertical map is in $\cal{M}$.
By~\ref{pointed-configurations:pushout}, the pushout exists and the right map is in $\cal{M}$.
Thus the colimit in $F \downarrow \cal{E}$ exists.
By~\ref{pointed-configurations:composition}, the composite top row is in $\cal{M}$.
The composite $X \to FX \to Y$ factors as a sequence $\colim_\alpha U \to \colim_\alpha V \to Y$ of maps in $\cal{M}$, so is itself in $\cal{M}$ by~\ref{pointed-configurations:composition}.
Thus the colimit lifts to $\cal{E}'$.

Let us verify that the component of the unit $\eta^{F'}$ on $(A, B, f) \in \cal{E}'$ depicted in the bottom row~\eqref{pointed-free-algebras:2} is in $\cal{M}'$.
The domain part was simply $f \cc \eta^F_A A \co A \to B$, which is in $\cal{E}$ by definition of $\cal{E}'$.
The codomain part is the bottom row of~\eqref{pointed-free-algebras:3}, which as we saw there is in $\cal{E}$.

It remains to check that $F'$ preserves $\kappa$-transfinite compositions of maps in $\cal{M}'$.
This is a consequence of~\ref{pointed-configurations:convergence}.
This establishes $(\cal{E}', \cal{M}', F')$ as an object of $\ConfWP{\kappa}$, allowing us to apply the object part of \cref{wellpointed-free-algebras} to finish the construction of the action on objects of $\ConfP{\kappa} \to \Adju_s$.

\addvspace{0.3cm}

Let us now examine functoriality of $\ConfP{\kappa} \to \Adju_s$.
Given a map $P \co (\cal{E}_1, \cal{M}_1, F_1) \to (\cal{E}_2, \cal{M}_2, F_2)$ of pointed configurations, we need to show that $P$ preserves free algebras.
Following the structure of the construction for the object part and reusing its notation annotated with appropriate subscripts, we will show that $P$ induces a morphism $P' \co (\cal{E}_1', \cal{M}_1', F_1') \to (\cal{E}_2', \cal{M}_2', F_2')$ between the associated wellpointed configurations, then apply the functoriality part of \cref{wellpointed-free-algebras}, and finish by relating preservation of free algebras for the associated wellpointed endofunctors to the original pointed endofunctors.

To see that $P$ lifts to a functor $P' \co \cal{E}_1' \to \cal{E}_2'$, we examine the pullback square~\eqref{pointed-free-algebras:0}.
Since $P$ is a map of pointed endofunctors, it lifts to a map of functors from $K_1$ to $K_2$.
Observe that $P$ lifts to a functor $\cal{M}_1 \to \cal{M}_2$ by assumption.
Together, this induces the lift $P' \co \cal{E}_1' \to \cal{E}_2'$.
Note that $P'$ maps $\cal{M}_1'$ to $\cal{M}_2'$ since $P$ maps $\cal{M}_1$ to $\cal{M}_2$.

To see that $P'$ preserves $\alpha$-transfinite compositions of maps in $\cal{M}_1$ for any limit ordinal $\alpha \leq \kappa$, recall the construction of $\alpha$-transfinite compositions in $\cal{E}_1'$ and $\cal{E}_2'$ as depicted in~\eqref{pointed-free-algebras:4} and note that $P$ preserves all of the involved colimits, namely pushouts and $\alpha$-transfinite compositions of maps in $\cal{M}_1$.

To see that $P'$ extends to a map of pointed endofunctors from $F_1'$ to $F_2'$, recall the construction of $F_1'$ and $F_2'$ via \cref{adjunction-transfer-pointed} as explicated in~\eqref{pointed-free-algebras:2}.
Note that $P$ preserves the pushout~\eqref{pointed-free-algebras:0.5} defining $J_1'$ since one of its legs is in $\cal{M}$,.
It follows that $P$ lifts to a map of adjunctions from $(J_1', K_1')$ to $(J_2', K_2')$
Since it also trivially lifts to a map of pointed endofunctors from $U_1$ to $U_2$, functoriality of \cref{adjunction-transfer-pointed} yields the desired map of pointed endofunctors from $F_1'$ to $F_2'$.

This shows that $P'$ is a morphism relating the wellpointed configurations $(\cal{E}_1', \cal{M}_1', F_1')$ and $(\cal{E}_2', \cal{M}_2', F_2')$.
Using \cref{wellpointed-free-algebras}, we have that $P'$ lifts to a map of adjunctions from the free algebra adjunction for $F_1'$ to the one for $F_2'$.

Using \cref{wellpointed-free-algebras}, it follows that $P'$ lifts to a map of adjunctions where the right adjoints are the forgetful functors $\alg(F_1') \to \cal{E}_1$ and $\alg(F_2') \to \cal{E}_2'$, or equivalently the functors $\alg(F_1) \to \cal{E}_1$ and $\alg(F_2) \to \cal{E}_2'$ from~\eqref{pointed-free-algebras:0.75}.
Note that $P$ trivially lifts to a map of adjunctions where the right adjoints are the functors $\cal{E}_1' \to \cal{E}_1$ and $\cal{E}_2' \to \cal{E}_2$ from~\eqref{pointed-free-algebras:0.75}.
Taken together, this implies that $P$ lifts to a map of adjunctions from the free algebra adjunction for $F_1$ to the one for $F_2$.
\end{proof}

\begin{proof}[Proof of \cref{pointed-free-monad}]
The main statement is an immediate consequence of \cref{pointed-free-algebras}, noting that the free and algebraically-free monad $T$ on a wellpointed endofunctor $F$ is given by the monad of the free algebra adjunction.
To verify the assertions~\ref{pointed-free-monad:colimit} to~\ref{pointed-free-monad:cartesian}, recall from~\eqref{pointed-free-algebras:0.75} that the free algebra adjunction for $F$ was up to equivalence constructed as a composite of adjunctions as follows:
\[
\xymatrix@C+1em{
  \alg(F')
  \ar@/_/[r]_-{R_2}
  \ar@{}[r]|-{\bot}
&
  \cal{E}'
  \ar@/_/[l]_-{L_2}
  \ar@/_/[r]_-{R_1}
  \ar@{}[r]|-{\bot}
&
  \cal{E}
  \ar@/_/[l]_-{L_1}
\rlap{.}}
\]
Here, recall the definition~\eqref{pointed-free-algebras:0.5} of $\cal{E}'$ and the wellpointed endofunctor $F'$ defined in~\eqref{pointed-free-algebras:3}.
Note that $L_1$ maps $\cal{M}$ to $\cal{M}'$ by~\ref{pointed-configurations:functor} and $R_1$ maps $\cal{M}'$ to $\cal{M}$, and that the unit of $L_1 \dashv R_1$ is an identity.

Assertion~\ref{pointed-free-monad:colimit} follows from commutativity of colimits and part~\ref{wellpointed-free-monad:colimit} of \cref{wellpointed-free-monad}.
\notec{Check}

The unit of $L_2 \dashv R_2$ is valued in $\cal{M}'$ by part~\ref{wellpointed-free-monad:unit} of \cref{wellpointed-free-monad}.
It follows that the unit of the composite adjunction is valued in $\cal{M}$, proving~\ref{wellpointed-free-monad:unit}.

For assertion~\ref{pointed-free-monad:functor}, we apply part~\ref{wellpointed-free-monad:functor} of \cref{wellpointed-free-monad} to the $\kappa$-composition preserving map $R_1 \co (\cal{E}', \cal{M}') \to (\cal{E}, \cal{M})$.
Since $F'$ preserves maps mapping to $\cal{M}$ via $R_1$,\footnote{Note that in general $F'$ will not preserve $\cal{M}'$.} so does the free monad on $F'$, given by $R_2 L_2$.
It follows that the free monad on $F$, given by $R_1 R_2 L_2 L_1$, preserves $\cal{M}$.

For assertion~\ref{pointed-free-monad:cartesian}, we apply part~\ref{wellpointed-free-monad:cartesian} of \cref{wellpointed-free-monad}, again to the $\kappa$-composition preserving map $R_1 \co (\cal{E}', \cal{M}') \to (\cal{E}, \cal{M})$.
We let $\cal{S}$ be the class of maps $(u, v) \co (A_1, B_1, f_1) \to (A_2, B_2, f_2)$ in $\cal{E}'$ such that
\[
\xymatrix{
  F A_1
  \ar[r]^{F u}
  \ar[d]^{f_1}
&
  F A_2
  \ar[d]^{f_2}
\\
  B_1
  \ar[r]^{v}
&
  B_2
}
\]
is a pullback square.
Note that $L_1$ maps any map to $\cal{S}$ as in this case the vertical maps are identities.
Note also that $R_1$ maps the naturality square of $\eta^{F'}$ on $(u, v)$ to the vertical pasting of the naturality square of $\eta^F$ on $u$ with the above square, which hence will be a pullback if $\eta^F$ is cartesian and $(u, v) \in \cal{S}$.
Note finally that $F'$ preserves $\cal{S}$ using van Kampen properties of pushouts in $(\cal{E}, \cal{M})$, the core part being an application of \cref{adequate-pushout}.
Combining everything, it follows that $\eta^T$ is cartesian.
\end{proof}

\begin{lemma} \label{adjunction-transfer-pointed}
Consider an adjunction
\[
\xymatrix{
  \cal{C}
  \ar@<0.5em>[r]^{L}
  \ar@{}@<-0.24em>[r]|{\dir{_|_}}
&
  \cal{D}
  \ar@<0.5em>[l]^{R}
\rlap{.}}
\]
Let $(U, \eta^U)$ be a pointed endofunctor on $\cal{C}$.
Assume that the following pointwise pushout in $[\cal{D}, \cal{D}]$ exists and is computed pointwise:
\[
\xymatrix@C+1em{
  L R
  \ar[r]^{L \eta^U R}
  \ar[d]
&
  L U R
  \ar[d]
\\
  \Id
  \ar[r]^{\eta^V}
&
  V
  \pullback{ul}
\rlap{.}}
\]
This defines a pointed endofunctor $(V, \eta^V)$ on $\cal{D}$.
The categories of algebras for $U$ and $V$ are related by a pullback, which is also a weak 2-pullback:
\begin{equation} \label{adjunction-transfer-pointed:0}
\begin{gathered}
\xymatrix{
  \alg(V)
  \ar[r]
  \ar[d]
  \pullback{dr}
&
  \alg(U)
  \ar[d]
\\
  \cal{D}
  \ar[r]^{R}
&
  \cal{C}
\rlap{.}}
\end{gathered}
\end{equation}
If $U$ is wellpointed, then so is $V$.
\end{lemma}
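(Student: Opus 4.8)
The plan is to reduce everything to the pushout defining $V$ and to adjunction bookkeeping. Write $\iota \co \Id \to RL$ and $\varepsilon \co LR \to \Id$ for the unit and counit of $L \dashv R$. Since the pushout is formed pointwise in $[\cal{D}, \cal{D}]$, the functor $V$ and the coprojection $\eta^V \co \Id \to V$ are determined, and I write $r \co LUR \to V$ for the other coprojection, so that both $\eta^V$ and $r$ are natural and the pushout relation reads $\eta^V \cc \varepsilon = r \cc L\eta^U R$. This already exhibits $(V, \eta^V)$ as a pointed endofunctor.

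For the algebra pullback I would unwind what a $V$-algebra structure is. By the universal property of $VD$, a map $b \co VD \to D$ with $b \cc \eta^V_D = \id$ is the same as a map $c \co LURD \to D$ satisfying $c \cc L\eta^U_{RD} = \varepsilon_D$. Transposing $c$ across $L \dashv R$ to $\bar c \co URD \to RD$ and using the triangle identity $R\varepsilon \cc \iota R = \id$, this condition becomes $\bar c \cc \eta^U_{RD} = \id$, i.e.\ exactly a $U$-algebra structure on $RD$. This correspondence is natural in $D$ and sends $V$-algebra morphisms to morphisms $g$ with $Rg$ a $U$-algebra morphism, exhibiting $\alg(V)$ as the strict pullback $\cal{D} \times_{\cal{C}} \alg(U)$. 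Since $\alg(U) \to \cal{C}$ is an isofibration (algebra structures transport along isomorphisms of underlying objects), this strict pullback coincides with the weak 2-pullback, giving the second assertion.

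For the final claim, assume $U$ is wellpointed, so $U\eta^U = \eta^U U$; I must show $\eta^V V = V \eta^V$. The first preparatory step is to take the mate of the pushout relation under $L \dashv R$: using the triangle identity this yields the identity $R\eta^V = \bar r \cc \eta^U R$, where $\bar r \co UR \to RV$ is the mate of $r$. Now $\eta^V_D$ and $r_D$ are jointly epic as pushout coprojections, so it suffices to compare $\eta^V_{VD}$ and $V\eta^V_D$ after precomposing with each. On the $\eta^V_D$-leg the two agree by naturality of $\eta^V$. On the $r_D$-leg, I would rewrite $r_D = \varepsilon_{VD} \cc L\bar r_D$ and apply the pushout relation at $VD$ together with naturality of $r$, so that the comparison collapses to the single equation $\eta^U_{RVD} \cc \bar r_D = UR\eta^V_D$.

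This last equation is the one genuine use of wellpointedness, and is where I expect the only real content to sit. Applying $U$ to the mate identity $R\eta^V_D = \bar r_D \cc \eta^U_{RD}$ gives $UR\eta^V_D = U\bar r_D \cc U\eta^U_{RD}$; wellpointedness rewrites $U\eta^U_{RD} = \eta^U_{URD}$, and then naturality of $\eta^U$ at $\bar r_D$ turns $U\bar r_D \cc \eta^U_{URD}$ into $\eta^U_{RVD} \cc \bar r_D$, as required. The main obstacle is therefore not conceptual but organizational: correctly threading the two triangle identities, the naturality of the mate $\bar r$, and the pushout relation at both $D$ and $VD$, so that the verification of $\eta^V V = V\eta^V$ funnels down to the single substitution $U\eta^U_{RD} = \eta^U_{URD}$ that wellpointedness permits.
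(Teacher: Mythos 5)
Your proposal is correct. The paper itself gives no argument here---its proof is the single line ``As in~\cite{kelly:transfinite}''---so there is nothing to diverge from; your computation is a faithful expansion of Kelly's construction. All three steps check out: the universal property of the pointwise pushout identifies a pointed $V$-algebra structure $b$ on $D$ with a map $c \co LURD \to D$ satisfying $c \cc L\eta^U_{RD} = \varepsilon_D$, whose transpose is exactly a pointed $U$-algebra structure on $RD$ (and the morphism condition transposes likewise), giving the strict pullback; the isofibration observation upgrades it to a weak 2-pullback; and the wellpointedness verification correctly funnels through the jointly epic coprojections, the mate identity $R\eta^V = \bar r \cc \eta^U R$, and the single substitution $U\eta^U = \eta^U U$. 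The only cosmetic caveat is that the unlabeled left leg of the pushout must be read as the counit $\varepsilon$, which you did.
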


\begin{proof}
As in~\cite{kelly:transfinite}.
\end{proof}

\section{Reviewing the algebraic small object argument}

The purpose of this section is to review the algebraic small object argument in a certain non-cocomplete setting.
As such, it contains mostly of a review of \cite{garner:small-object-argument}.
The setting has been chosen minimalistictly so that the assumptions needed in the next section to lift am extension operations from the generating category of arrows to the domain part of the category of coalgebras are satisfied.

Recall the notation from \cref{preliminaries}.
Let $\cal{E}$ be a category with a designated class $\cal{M}$ of morphisms.
Let $\kappa > 0$ be a regular ordinal.

\begin{definition} \label{adequate}
We say that $(\cal{E}, \cal{M}, \kappa)$ is \emph{adequate} if the following hold:
\begin{enumerate}[label={(\thedefinition\alph*)}]
\item \label{adequate:pullback}
$(\cal{E}, \cal{M})$ has pullbacks.
\item \label{adequate:composition}
$(\cal{E}, \cal{M})$ has van Kampen $\kappa$-compositions.
\item \label{adequate:pushout}
$(\cal{E}, \cal{M})$ has van Kampen pushouts.
\item \label{adequate:binary-union}
$(\cal{E}, \cal{M})$ has binary unions.
\end{enumerate}
\end{definition}

Note that condition~\ref{adequate:pullback} implies that the class $\cal{M}$ is replete, \ie given $f, g \in \cal{E}^\to$ with $f \simeq g$, then $f \in \cal{M}$ implies $g \in \cal{M}$.
Recall from \cref{adhesive} that all maps in $\cal{M}$ are regular monomorphisms.
This explains the terminology of binary union of subobjects in~\ref{adequate:binary-union}.

Let now $u \co \cal{I} \to \cal{E}^\to$ be a small category of arrows in $\cal{E}$.

\begin{definition} \label{adequate-u}
We say that $(\cal{E}, \cal{M}, \kappa, \cal{I}, u)$ is \emph{adequate} if the following hold:
\begin{enumerate}[label=(\thedefinition\alph*)]
\item \label{adequate-u:compact}
The functor $u$ is valued in arrows with $\kappa$-compact domain.
\item \label{adequate-u:lift}
The functor $u$ lifts through the inclusion $\cal{M}_{\cart} \to \cal{E}^\to$.
\item \label{adequate-u:colimits}
We have colimits of shape $\cal{I}/f$ in $\cal{M}_{\cart}$ for $f \in \cal{E}^\to$ and they are preserved by $\cal{M}_{\cart} \to \cal{E}^\to$.
\item \label{adequate-u:preservation}
The left Kan extension of $u$ along itself, guaranteed to exist by the previous condition, preserves $\overline{\cal{M}}$.
\item \label{adequate-u:van-kampen}
The colimit of the pointwise formula for the left Kan extension of $u$ along itself is van Kampen.
\end{enumerate}
\end{definition}

\notec{Replace~\ref{adequate-u:preservation} and~\ref{adequate-u:van-kampen} by better conditions.}

The construction~\cite{garner:small-object-argument} of the algebraic weak factorization system free on $\cal{I}$ with $\cal{E}$ cocomplete proceeds via stepwise reflection along the following sequence of fully faithful functors:
\[
\xymatrix{
  \AWFS(\cal{E})
  \ar[r]^{G_1}
&
  \LAWFS(\cal{E})
  \ar[r]^{G_2}
&
  \Cmd(\cal{E}^\to)
  \ar[r]^{G_3}
&
  \Cat/\cal{E}^\to
\rlap{.}}
\]
Let us use this opportunity to introduce some terminology, mostly following~\cite{garner:small-object-argument}: $\Mnd(\cal{E}^\to)$, $\Cmd(\cal{E}^\to)$, $\PtdEndo(\cal{E}^\to)$, $\CoptdEndo(\cal{E}^\to)$ denote the categories of monads, comonads, and (co)pointed endofunctors on $\cal{E}^\to$, respectively; $\AWFS(\cal{E})$ denotes the category of algebraic weak factorization systems on $\cal{E}$; $\LAWFS(\cal{E})$ denotes the category of left halves of algebraic weak factorization systems, or equivalently the category of domain-preserving comonads on $\cal{E}^\to$, \ie comonads over the identity comonad with respect to the codomain fibration; similarly, we write $\RAWFS(\cal{E})$ for the category of right halves of algebraic weak factorization systems, equivalently the category of codomain-preserving monads.

Even in a non-complete setting, this construction still works without modification for adequate $(\cal{E}, \cal{M}, \kappa, \cal{I}, u)$.
This is summarized in the following three propositions.

\begin{proposition} \label{reflecting-G3}
Let $(\cal{E}, \cal{M}, \kappa, \cal{I}, u)$ be adequate.
Then $(\cal{I}, u) \in \Cat/\cal{E}^\to$ has a reflection $U$ along the forgetful functor $\Cmd(\cal{E}^\to) \to \Cat/\cal{E}^\to$ and $U$ lifts through the inclusion $\cal{M}_{\cart} \to \cal{E}^\to$.
Furthermore $U$ preserves $\overline{\cal{M}}$ and $\kappa$-transfinite compositions of maps in $\overline{\cal{M}}$.
\end{proposition}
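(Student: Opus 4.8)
The plan is to realize $U$ as the density comonad of $u$, whose underlying endofunctor is the left Kan extension $\operatorname{Lan}_u u$ of $u$ along itself, computed pointwise by the comma-category colimit $U(f) \defeq \colim_{\cal{I}/f} (u \cc \pi_f)$, where $\pi_f \co \cal{I}/f \to \cal{I}$ is the projection out of $\cal{I}/f = u \downarrow f$. That this is the reflection along $\Cmd(\cal{E}^\to) \to \Cat/\cal{E}^\to$ is the standard density-comonad (nerve--realization) adjunction of~\cite{kelly:transfinite, garner:small-object-argument}: comonad morphisms $\operatorname{Lan}_u u \to D$ correspond naturally to functors $\cal{I} \to \coalg(D)$ over $\cal{E}^\to$, and the counit and comultiplication of $U$ are induced by the universal property of the defining colimits. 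I would recall this formal part only briefly, the content for us being its interaction with $\cal{M}$.

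For the lift through $\cal{M}_{\cart} \to \cal{E}^\to$, note that by~\ref{adequate-u:lift} the diagram $u \cc \pi_f$ factors through $\cal{M}_{\cart}$, and by~\ref{adequate-u:colimits} its colimit may be formed in $\cal{M}_{\cart}$ and is preserved by $\cal{M}_{\cart} \to \cal{E}^\to$; in particular $U(f) \in \cal{M}$. Given any $g \co f \to f'$ in $\cal{E}^\to$, postcomposition induces $\cal{I}/f \to \cal{I}/f'$, exhibiting a cocone from $u \cc \pi_f$ to $U(f')$ whose legs are coprojections of the colimit $U(f')$, hence morphisms of $\cal{M}_{\cart}$; the comparison map $U(g) \co U(f) \to U(f')$ is therefore again a morphism of $\cal{M}_{\cart}$, \ie a pullback square. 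This gives the desired lift. Preservation of $\overline{\cal{M}}$ is then nothing but condition~\ref{adequate-u:preservation}, since $U$ is by construction the left Kan extension of $u$ along itself.

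The principal task is preservation of $\kappa$-transfinite compositions of maps in $\overline{\cal{M}}$. Let $f_\bullet \co \alpha \to \cal{E}^\to_{\cart}$ be such a composition ($\alpha \le \kappa$ a limit ordinal), with colimit $f$ obtained as in~\cref{van-kampen-composition}. By the preceding paragraph $U$ carries it to a chain of $\overline{\cal{M}}$-maps in $\cal{M}_{\cart}$, whose colimit is computed by~\cref{van-kampen-lift} as the pointwise one; it therefore suffices to show that the canonical comparison $\colim_\beta U(f_\beta) \to U(f)$ is invertible. Since $U = \operatorname{Lan}_u u$ and colimits commute, this unwinds to the interchange $\colim_{\cal{I}/f} u \cong \colim_\beta \colim_{\cal{I}/f_\beta} u$, \ie to comparing the comma categories $\cal{I}/f$ and $\colim_\beta \cal{I}/f_\beta$. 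Here~\ref{adequate-u:compact} enters: each generator has $\kappa$-compact domain, so the domain leg of any square $u(i) \to f$ factors through some stage $f_\beta$.

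I expect the main obstacle to be the codomain leg, as the codomains of the generators are not assumed $\kappa$-compact, so a square into $f$ need not factor through a single stage on the nose. The tool to bring to bear is that the composition is taken in $\cal{E}^\to_{\cart}$ and is van Kampen, so the coprojections $f_\beta \to f$ are pullback squares; combined with the van Kampen property~\ref{adequate-u:van-kampen} of the colimit defining $U$, this is what must be exploited to descend the comparison and identify the interchange. Once the comma-category comparison is settled, the claim follows, with~\cref{van-kampen-lift} once more guaranteeing that the outcome is the colimit formed in $\cal{M}_{\cart}$.
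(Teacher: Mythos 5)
Your construction of $U$ as the density comonad $\Lan_u u$, computed pointwise as the colimit of $u$ over $\cal{I}/f$, the verification that it lifts through $\cal{M}_{\cart} \to \cal{E}^\to$ via~\ref{adequate-u:lift} and~\ref{adequate-u:colimits}, and the appeal to~\ref{adequate-u:preservation} for preservation of $\overline{\cal{M}}$ all coincide with the paper's proof, which likewise factors $\Lan_u u$ as $I \cc \Lan_u u'$ through the lift $u' \co \cal{I} \to \cal{M}_{\cart}$ and defers the comonad structure and freeness to \cite{garner:small-object-argument}.

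The gap is in the final clause. You correctly reduce preservation of a $\kappa$-transfinite composition $f = \colim_\beta f_\beta$ to comparing $\cal{I}/f$ with the stagewise comma categories, and you correctly observe that~\ref{adequate-u:compact} only factors the \emph{domain} leg of a square $u(i) \to f$ through a stage. But you then stop: ``this is what must be exploited'' is not an argument, and the tool you point to does not visibly close the hole. Cartesianness of the coprojection $f_\beta \to f$ gives, for a square $(a,b) \co u(i) \to f$ whose domain leg $a$ factors through $\dom f_\beta$, only a factorization of the composite $b \cc u(i)$ through $\cod f_\beta$; finality of $\int_\beta (\cal{I}/f_\beta) \to \cal{I}/f$ needs $b$ itself to factor, and this genuinely fails when $\cod u(i)$ is not $\kappa$-compact (take a single generator $\emptyset \to \mathbb{N}$ in $\Set$ with $\kappa = \omega$ and the chain $\emptyset \to \braces{0,\dots,n}$: the codomain of $\colim_n U(f_n)$ picks up only the bounded maps $\mathbb{N} \to \mathbb{N}$). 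The paper disposes of this clause by citing the end of the proof of Proposition~4.22 of \cite{garner:small-object-argument}, whose argument concerns sequences with \emph{constant} codomain, where domain-compactness alone suffices; so you have located exactly the delicate point, but locating it is not resolving it, and as written your proof of ``$U$ preserves $\kappa$-transfinite compositions of maps in $\overline{\cal{M}}$'' is incomplete.
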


\begin{proof}
This is a slightly more general analogue of~\cite[Proposition~4.6]{garner:small-object-argument}, we only detail the new aspects.
Let $u' \co \cal{I} \to \cal{M}_{\cart}$ denote the lift of $u$ through $I$.
By~\ref{adequate-u:colimits}, the left Kan extension $\Lan_u u' \co \cal{E}^\to \to \cal{M}_{\cart}$ exists and is computed pointwise as the colimit over $u'$ with weight sending $f$ to $\cal{E}^\to(u(-), f)$.
Furthermore, by preservation under $I$, the pointwise left Kan extension $U \defeq \Lan_u u$ exists and is computed as $\Lan_u u = I \cc \Lan_u u'$.
Comonad structure of $U$ and freeness follow as before.
Preservation of $\kappa$-transfinite compositions follows as in the end of the proof of~\cite[Proposition~4.22]{garner:small-object-argument}.
Preservation of $\overline{\cal{M}}$ is guaranteed by~\ref{adequate-u:preservation}.
\end{proof}

\begin{proposition} \label{reflecting-G2}
Let $(\cal{E}, \cal{M}, \kappa)$ be adequate.
Let $U$ be a comonad on $\cal{E}^\to$ such that $U$ lifts through the inclusion $\cal{M}_{\cart} \to \cal{E}^\to$.
Then $U$ has a reflection $V$ along the forgetful functor $\LAWFS(\cal{E}) \to \Cmd(\cal{E}^\to)$ and $V$ lifts through the inclusion $\cal{M}_{\cart} \to \cal{E}^\to$.

If $U$ preserves $\overline{\cal{M}}$, then so does $V$.
If $U$ preserves $\kappa$-transfinite compositions of maps in $\overline{\cal{M}}$, then so does $V$.
\end{proposition}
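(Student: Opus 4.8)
The plan is to construct $V$ by a single pushout, following the cocomplete treatment of the corresponding reflection in~\cite{garner:small-object-argument} while tracking the class $\cal{M}$. Write $\epsilon \co U \Rightarrow \Id$ for the counit of the comonad $U$ and $\epsilon^0 \co \dom \cc U \Rightarrow \dom$ for its domain component. For $f \in \cal{E}^\to$ with domain $A$, I would define $Vf \co A \to V_1 f$ as the pushout of $Uf$ along $\epsilon^0_f \co \dom(Uf) \to A$, so that $V_1 f = \cod(Uf) +_{\dom(Uf)} A$. This pushout exists by~\ref{adequate:pushout} since $Uf \in \cal{M}$---this is exactly where the hypothesis that $U$ lifts through $\cal{M}_{\cart}$ enters---and $Vf \in \cal{M}$ by pushout-stability of $\cal{M}$. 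The counit $\epsilon^V_f \co Vf \to f$ is taken to be the identity on $A$ together with the map $V_1 f \to \cod f$ induced by $f$ and the codomain component of $\epsilon_f$, rendering $V$ domain-preserving. The underlying copointed endofunctor of $V$ is the reflection of that of $U$ into the domain-preserving ones, and I would transport the comultiplication of $U$ along this reflection to equip $V$ with the structure of an object of $\LAWFS(\cal{E})$, together with the universal comonad morphism $\eta \co U \Rightarrow V$ whose domain component is $\epsilon^0$ and whose codomain component is the remaining pushout coprojection $\cod(Uf) \to V_1 f$.

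The preservation claims then fall out of \cref{adequate-pushout}, whose hypotheses are supplied by adequacy. Recognising $Vf$ as the value at $(Uf, \epsilon^0_f) \in \cal{M}_{\cart} \times_{\cal{E}} \cal{E}^\to$ of the pushout functor $P$ of that lemma, and using that $U$ lands in $\cal{M}_{\cart}$, I obtain that $V$ lands in $\cal{M}_{\cart}$ as well, since $P$ does; here the van Kampen pushouts of the adequate setting are what guarantee that $V$ sends squares to pullback squares. For preservation of $\overline{\cal{M}}$, suppose $U$ preserves $\overline{\cal{M}}$ and let $\phi \in \overline{\cal{M}}$. Then $U\phi \in \overline{\cal{M}}$ by assumption, and the naturality square of $\epsilon^0$ on $\phi$ also lies in $\overline{\cal{M}}$, its two components being the domain components of $U\phi$ and of $\phi$, both in $\cal{M}$. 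Since $(\cal{E}, \cal{M})$ has binary unions by~\ref{adequate:binary-union} and $\cal{M}$ is closed under composition by~\ref{adequate:composition}, the final assertion of \cref{adequate-pushout} applies to show $V\phi \in \overline{\cal{M}}$. Finally, preservation of $\kappa$-transfinite compositions of maps in $\overline{\cal{M}}$ follows from commutation of colimits: the pushout $P$ is a finite colimit and hence commutes with the transfinite composition, so $V$ preserves such a composition whenever $U$ does, with the van Kampen $\kappa$-compositions ensuring everything remains in $\cal{M}_{\cart}$.

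The one genuinely delicate point, and the part I would lean on~\cite{garner:small-object-argument} for, is the transport of the comonad structure and the verification of the universal property. Conceptually this is the assertion that the domain-preserving copointed endofunctors form a subcategory closed under composition and containing the identity, so that its reflection is oplax monoidal with respect to composition and therefore carries the comonoid $U$---that is, the comonad---reflectively to a comonad. The only change relative to the cocomplete case is that this reflection is computed by the single pushout above rather than by a general colimit, so its existence and compatibility with composition are already secured by~\ref{adequate:pushout} together with the van Kampen condition of an adequate $(\cal{E}, \cal{M}, \kappa)$; no cocompleteness of $\cal{E}$ beyond pushouts along $\cal{M}$ is required. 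I therefore expect the reflection to go through essentially verbatim once the single pushout is known to be well behaved, making the tracking of $\cal{M}_{\cart}$, $\overline{\cal{M}}$, and $\kappa$-compositions the real substance of the proof.
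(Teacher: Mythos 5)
Your proposal is correct and follows essentially the same route as the paper: the paper constructs $V$ by factoring the counit of $U$ through the (pushout square, iso-on-domain) orthogonal factorization system on the full subcategory of $\cal{E}^\to$ on $\cal{M}$, which is precisely your single pushout of $Uf$ along the domain component of the counit, and it likewise defers the comonad structure and freeness to Garner while deriving the $\cal{M}_{\cart}$-valuedness and $\overline{\cal{M}}$-preservation from \cref{adequate-pushout} and the transfinite-composition preservation from commutativity of colimits.
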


\begin{proof}
This is a slightly more general analogue of~\cite[Proposition~4.7]{garner:small-object-argument}, we only detail the new aspects.
By~\ref{adequate:pushout}, we still have an orthogonal factorization system $(L, R)$ with left class pushout squares and right class maps $(f, g)$ with $f$ an isomorphism, but only on the full subcategory of $\cal{E}^\to$ on $\cal{M}$.
Factoring the counit of $U$ as $U \to V \to \Id$ defines its domain-preserving reflection:
\[
\xymatrix{
  \bullet
  \ar[r]
  \ar[d]_{U}
&
  \dom
  \ar[d]^{V}
\\
  \bullet
  \ar[r]
&
  \bullet
  \pullback{ul}
}
\]
The comonad structure of $V$ and its freeness follow as before.
Note that $V$ lifts through $\cal{M}_{\cart} \to \cal{E}^\to$ by \cref{adequate-pushout}.

If $U$ preserves $\overline{\cal{M}}$, then so does $V$ by \cref{adequate-pushout}.
If $U$ preserves $\kappa$-transfinite compositions of maps in $\cal{N}$, then so does $V$ by commutativity of colimits.
\end{proof}

We briefly summarize the contents of \cite[Theorem~4.14]{garner:small-object-argument}.
The category $\FF(\cal{E})$ of functorial factorizations on $\cal{E}$ consists of sections of the composition functor $\cal{E}^{\to} \times_{\cal{E}} \cal{E}^{\to} \to \cal{E}^\to$.
There are two canonical monoidal structures $(\bot, \odot)$ and $(I, \otimes)$ on $\FF(\cal{E})$ forming a two-fold monoidal structure.
Garner's addition of the distributivity law to the theory of algebraic weak factorization systems ensures that the diamond
\[
\xymatrix{
&
  \AWFS(\cal{E})
  \ar[dl]
  \ar[dr]
\\
  \LAWFS(\cal{E})
  \ar[dr]
&&
  \RAWFS(\cal{E})
  \ar[dl]
\\&
  \FF(\cal{E})
}
\]
is precisely the diamond
\[
\xymatrix{
&
  \Bialg_{\otimes, \odot}(\FF(\cal{E}))
  \ar[dl]
  \ar[dr]
\\
  \Comon_{\odot}(\FF(\cal{E}))
  \ar[dr]
&&
  \Mon_{\otimes}(\FF(\cal{E}))
  \ar[dl]
\\&
  \FF(\cal{E})
}
\]
where
\begin{align*}
\Bialg_{\otimes, \odot}(\FF(\cal{E}))
&=
\Mon_{\otimes}(\Comon_{\odot}(\FF(\cal{E})))
\\&=
\Comon_{\odot}(\Mon_{\otimes}(\FF(\cal{E})))
\end{align*}
and all arrows are given by forgetful functors.

\begin{proposition} \label{reflecting-G1}
Let $(\cal{E}, \cal{M}, \kappa)$ be adequate.
Let $V$ be a domain-preserving comonad on $\cal{E}^\to$ such that $V$ lifts through the inclusion $\cal{M}_{\cart} \to \cal{E}^\to$ and preserves $\overline{\cal{M}}$ and $\kappa$-transfinite compositions of maps in $\overline{\cal{M}}$.
Then $V$ has a reflection $(L, R)$ along the forgetful functor $\AWFS(\cal{E}) \to \LAWFS(\cal{E})$.
Furthermore, $L$ lifts through the inclusion $\cal{M}_{\cart} \to \cal{E}^\to$ and $L, R$ preserve $\overline{\cal{M}}$.
\end{proposition}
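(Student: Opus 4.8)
The plan is to realise the reflection, following \cite{garner:small-object-argument}, as the free monad on a pointed endofunctor of $\cal{E}^\to$ extracted from $V$, and to obtain the stated preservation properties by feeding a suitable configuration into \cref{pointed-free-monad}. Since $V$ is a domain-preserving comonad, its counit exhibits a functorial factorization: writing $V f = \lambda_f \co A \to E f$ for $f \co A \to B$ and letting $R_0 f = \rho_f \co E f \to B$ be the codomain component of the counit $\epsilon_f \co V f \to f$, we have $f = \rho_f \cc \lambda_f$, and this defines a codomain-preserving pointed endofunctor $(R_0, \eta^{R_0})$ on $\cal{E}^\to$ with $\eta^{R_0}_f = (\lambda_f, \id) \co f \to \rho_f$. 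I would then take $(L, R)$ to be the free monad on $R_0$: the distributivity law, and hence the bialgebra/$\AWFS$ structure together with the universal property of the reflection, is supplied exactly as in \cite{garner:small-object-argument}, so that only the class-tracking conclusions are new. (That $R$ is again codomain-preserving, so that it really is the monad part of an $\AWFS$, follows since $\cod$ preserves the colimits of the free monad sequence and sends that for $R_0$ to the sequence for the identity.)

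The first substantive step is to verify $(\cal{E}^\to, \overline{\cal{M}}, R_0) \in \ConfP{\kappa}$. Conditions~\ref{pointed-configurations:composition} and~\ref{pointed-configurations:pushout} hold because the relevant colimits in $(\cal{E}^\to, \overline{\cal{M}})$ are computed componentwise from those in $(\cal{E}, \cal{M})$, which exist by adequacy of $(\cal{E}, \cal{M}, \kappa)$. The point $\eta^{R_0}_f = (\lambda_f, \id)$ is valued in $\overline{\cal{M}}$ since $\lambda_f = V f \in \cal{M}$ because $V$ lifts through $\cal{M}_{\cart}$ and identities lie in $\cal{M}$, giving~\ref{pointed-configurations:unit}. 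For~\ref{pointed-configurations:functor}, note that $R_0(u, v) = (E(u,v), v)$, whose codomain component $v$ lies in $\cal{M}$ when $(u,v) \in \overline{\cal{M}}$ and whose domain component $E(u,v)$ lies in $\cal{M}$ precisely because $V(u,v) = (u, E(u,v))$ and $V$ preserves $\overline{\cal{M}}$; hence $R_0$ preserves $\overline{\cal{M}}$. Convergence~\ref{pointed-configurations:convergence} follows from the hypothesis that $V$ preserves $\kappa$-transfinite compositions of maps in $\overline{\cal{M}}$, as $R_0$ is determined by $V$ with trivial codomain part.

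The crucial condition is~\ref{pointed-configurations:leibniz-application}, for which I would invoke \cref{pointed-configuration-simplification}. Its hypotheses are met: maps in $\overline{\cal{M}}$ are monomorphisms by \cref{adhesive}, $(\cal{E}^\to, \overline{\cal{M}})$ has pullbacks along $\overline{\cal{M}}$-maps and binary unions componentwise, and $\eta^{R_0}$ is cartesian. The last point is where the hypothesis that $V$ lifts through $\cal{M}_{\cart}$ is used in its cartesian aspect: a naturality square of $\eta^{R_0}$ is a pullback in $\cal{E}^\to$ iff it is one in both the domain and codomain components; the codomain component is trivial, while the domain component is exactly the naturality square of $\lambda = V$, a pullback since $V$ lands in $\cal{M}_{\cart}$. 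Thus the configuration is valid and \cref{pointed-free-monad} yields the free monad $R$ with unit $\eta^R$ valued in $\overline{\cal{M}}$ (assertion~\ref{pointed-free-monad:unit}), preserving $\overline{\cal{M}}$ (assertion~\ref{pointed-free-monad:functor}), and with $\eta^R$ cartesian (assertion~\ref{pointed-free-monad:cartesian}, whose van Kampen hypotheses hold for $(\cal{E}^\to, \overline{\cal{M}})$ by adequacy).

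Finally I would translate these outputs into the stated conclusions for the reflected factorization, whose left functor $L$ is recovered from the unit as $L f = \lambda'_f \co A \to E' f$, the domain component of $\eta^R_f \co f \to R f$, with $E' = \dom \cc R$. That $L$ lands in $\cal{M}$ is assertion~\ref{pointed-free-monad:unit}; that $L$ is cartesian is the componentwise reformulation of $\eta^R$ being cartesian, exactly as for $R_0$ above; together these give that $L$ lifts through $\cal{M}_{\cart}$. That $R$ preserves $\overline{\cal{M}}$ is assertion~\ref{pointed-free-monad:functor}, and since $L(u,v) = (u, E'(u,v))$ with $E'(u,v)$ the domain component of $R(u,v)$, preservation of $\overline{\cal{M}}$ by $R$ forces it for $L$. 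The main obstacle I anticipate is bookkeeping rather than conceptual: discharging~\ref{pointed-configurations:leibniz-application} through \cref{pointed-configuration-simplification} and confirming that the componentwise colimits in $(\cal{E}^\to, \overline{\cal{M}})$ inherit the van Kampen property required by~\ref{pointed-free-monad:cartesian}. The single conceptual observation that organizes everything is that ``$V$ lifts through $\cal{M}_{\cart}$'' encodes at once $\lambda_f \in \cal{M}$ and the cartesianness of $\eta^{R_0}$.
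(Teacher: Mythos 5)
Your proposal is correct and follows essentially the same route as the paper: reduce everything to verifying that $(\cal{E}^\to, \overline{\cal{M}}, R_0)$ is an object of $\ConfP{\kappa}$ (with the Leibniz condition discharged via \cref{pointed-configuration-simplification}, using that $\eta^{R_0}$ is cartesian because $V$ lands in $\cal{M}_{\cart}$) and then read off the class-tracking conclusions from \cref{pointed-free-monad}. The only cosmetic difference is that the paper packages the ``structure supplied by Garner'' step by running the free \emph{monoid} sequence on the pointed object $V$ in $\Comon_{\odot}(\FF(\cal{E}))$ via \cref{pointed-free-monoid}, using that the forgetful functor to $[\cal{E}^\to, \cal{E}^\to]$ creates connected colimits, which is exactly what justifies your parenthetical appeal to Garner in the non-cocomplete setting.
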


\begin{proof}
This is a slightly more general analogue of~\cite[Proposition~4.21]{garner:small-object-argument}, we only detail the new aspects.
We need to construct a reflection of $V$ along the forgetful functor $\Mon_{\otimes}(\cal{V}) \to \cal{V}$ where $V \defeq \Comon_{\odot}(\FF(\cal{E}))$, thus construct the free monoid on the canonically pointed object $V$, noting that the unit $I$ of $\cal{V}$ is initial.
As per the proof~\cite[Proposition~4.18]{garner:small-object-argument}, the forgetful functor
\[
\xymatrix@C+1em{
  \cal{V}
  \ar[r]
&
  \FF(\cal{E})
  \ar[r]^-{d_0 \cc (-)}
&
  [\cal{E}^\to, \cal{E}^\to]
}
\]
preserves the relevant monoidal structure, creates connected colimits, and maps the pointed object $I \to V$ to $\eta_0 \co \Id \to R_0$ where $R_0(f)$ is the codomain part of the counit of $V$ on $f$ and $\eta_f = (V(f), \id_Y)$ for $f \co X \to Y$ in $\cal{E}$.
Instead of lifting $(\cal{V}, (L, R))$ to an object of $\ConfPM{\kappa}$, it thus suffices to lift $([\cal{E}^\to, \cal{E}^\to], R_0)$ to an object of $\ConfPM{\kappa}$, which will be accomplished by showing $(\cal{E}^\to, \overline{\cal{M}}, R_0)$ an object of $\ConfP{\kappa}$.
The result then follows by \cref{pointed-free-monoid}.

Let us verify the conditions of \cref{pointed-configurations}.
\begin{itemize}
\item
Condition~\ref{pointed-configurations:composition} is the lift of~\ref{adequate:composition} to the arrow category.
\item
Condition~\ref{pointed-configurations:pushout} is the lift of~\ref{adequate:composition} to the arrow category.
\item
Condition~\ref{pointed-configurations:unit} is satisfied by the assumption that $V$ is valued in $\cal{M}_{\cart}$.
\item
Condition~\ref{pointed-configurations:leibniz-application} is satisfied in view of \cref{pointed-configuration-simplification} by the lift of~\ref{adequate:binary-union} to the arrow category.
Note that $\eta$ is cartesian since $V$ is valued in $\cal{M}_{\cart}$.
\item
For condition~\ref{pointed-configurations:functor}, note that $R_0$ preserves $\overline{\cal{M}}$ if $\dom \cc R_0 = \cod \cc V$ and $\cod \cc R_0 = \cod$ map $\overline{\cal{M}}$ to $\cal{M}$.
But $V$ preserves $\overline{\cal{M}}$ by assumptions.
\item
For condition~\ref{pointed-configurations:convergence}, note that $R_0$ preserves a $\kappa$-transfinite compositions of maps in $\overline{\cal{M}}$ if $\dom \cc R_0 = \cod \cc V$ and $\cod \cc R_0 = \cod$ preserve them.
But $V$ preserves them by assumption.
\end{itemize}
We may thus apply \cref{pointed-free-monad}.
By part~\ref{pointed-free-monad:unit}, we have that $L$ preserves $\overline{\cal{M}}$.
By part~\ref{pointed-free-monad:functor}, we have that $R$ preserves $\overline{\cal{M}}$.
By part~\ref{pointed-free-monad:cartesian}, we have that the action of $L$ on morphisms is valued in pullback squares.
For this, we have to note that $(\cal{E}^\to, \overline{\cal{M}})$ has pullbacks and van Kampen pushouts lifted from $(\cal{E}, \cal{M})$.
\end{proof}

\begin{proposition} \label{reflecting}
Let $(\cal{E}, \cal{M}, \kappa, \cal{I}, u)$ be adequate.
Then the algebraic weak factorization system $(L, R)$ free on $\cal{I}$ exists and is algebraically free.
Furthermore, $L$ lifts through the inclusion $\cal{M}_{\cart} \to \cal{E}^\to$ and $L$ and $R$ preserve $\overline{\cal{M}}$.
\end{proposition}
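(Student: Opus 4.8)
The plan is to obtain $(L, R)$ by composing the three stepwise reflections supplied by \cref{reflecting-G3,reflecting-G2,reflecting-G1}, threading the relevant closure properties through each stage. First I would apply \cref{reflecting-G3} to the adequate data $(\cal{E}, \cal{M}, \kappa, \cal{I}, u)$, reflecting $(\cal{I}, u) \in \Cat/\cal{E}^\to$ to a comonad $U \in \Cmd(\cal{E}^\to)$ that lifts through $\cal{M}_{\cart} \to \cal{E}^\to$, preserves $\overline{\cal{M}}$, and preserves $\kappa$-transfinite compositions of maps in $\overline{\cal{M}}$.

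The crucial observation is that these three output properties of $U$ are exactly the hypotheses demanded of the input comonad in \cref{reflecting-G2}. Applying it produces a reflection $V \in \LAWFS(\cal{E})$ of $U$ along $\Cmd(\cal{E}^\to)$ which, again, lifts through $\cal{M}_{\cart}$ and preserves $\overline{\cal{M}}$ and $\kappa$-transfinite compositions of maps in $\overline{\cal{M}}$. These are in turn precisely the hypotheses on the domain-preserving comonad required by \cref{reflecting-G1}. A final application yields the reflection $(L, R) \in \AWFS(\cal{E})$ of $V$ along $\LAWFS(\cal{E})$ together with the asserted properties: $L$ lifts through $\cal{M}_{\cart}$ and both $L$ and $R$ preserve $\overline{\cal{M}}$. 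This settles the ``furthermore'' clause directly.

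It remains to assemble the three local reflections into one. Pasting the universal arrows $(\cal{I}, u) \to G_3 U \to G_3 G_2 V \to G_3 G_2 G_1 (L, R)$ gives a universal arrow for the composite forgetful functor $\AWFS(\cal{E}) \to \Cat/\cal{E}^\to$, a general fact about reflections that needs nothing beyond the universal properties already in hand, with full faithfulness of the $G_i$ merely witnessing that these are genuine reflective embeddings. Hence $(L, R)$ is the reflection of $(\cal{I}, u)$, \ie the algebraic weak factorization system free on $\cal{I}$. Algebraic freeness---that $\alg(R)$ agrees over $\cal{E}^\to$ with the category $\liftr{\cal{I}}$ of $\cal{I}$-algebras---is then no extra work: it holds for exactly the reasons given in~\cite[Section~4]{garner:small-object-argument}, the only deviation from that argument being the explicit colimit bookkeeping already absorbed into \cref{reflecting-G3,reflecting-G2,reflecting-G1}.

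Since the component propositions are already established, there is no single deep step; the one place demanding care is verifying that the conclusions of each proposition line up verbatim with the hypotheses of the next, in particular that $\overline{\cal{M}}$-preservation and preservation of $\kappa$-transfinite compositions of $\overline{\cal{M}}$-maps survive both intermediate reflections---which is exactly what the conditional clauses of \cref{reflecting-G2} were designed to guarantee. A secondary subtlety worth flagging is that, the ambient categories not being assumed cocomplete, we have only pointwise reflections of the specific objects $(\cal{I}, u)$, $U$, $V$ rather than full left adjoints to the $G_i$; but the composition-of-reflections argument above uses only these pointwise universal properties.
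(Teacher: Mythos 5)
Your proposal is correct and matches the paper's proof, which likewise obtains $(L,R)$ by chaining \cref{reflecting-G3,reflecting-G2,reflecting-G1} (whose conclusions were stated precisely so as to feed into one another), defers algebraic freeness to Garner's argument, and reads off the properties of $L$ and $R$ from the final reflection step. The only cosmetic difference is that the paper notes $R$ preserves $\overline{\cal{M}}$ because $L$ does, whereas you take it directly from the statement of \cref{reflecting-G1}; both are fine.
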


\begin{proof}
For the existence of $(L, R)$ and the properties of $L$, we combine \cref{reflecting-G3,reflecting-G2,reflecting-G1}.
Algebraic freeness follows just like in~\cite{garner:small-object-argument}.
Note that $R$ preserves $\overline{\cal{M}}$ since $L$ does.
\end{proof}

\begin{remark}
In the setting of \cref{reflecting}, we furthermove have that the left category $\coalg(L) \to \cal{E}^\to$ lifts through $\cal{E}_{\cart}^\to \to \cal{E}^\to$, and even through $\cal{M}_{\cart} \to \cal{E}^\to$ if $\cal{M}$ is closed under retracts in the arrow category.
To see this, recall that copointed endofunctor coalgebras for $L$ are functorially retracts of free $L$-coalgebras and retract diagrams in the arrow category
\[
\xymatrix{
  \bullet
  \ar[r]
  \ar[d]
  \ar@{=}@/^1em/[rr]
&
  \bullet
  \ar[r]
  \ar[d]^{\in \cal{M}}
&
  \bullet
  \ar[d]
\\
  \bullet
  \ar[r]
  \ar@{=}@/_1em/[rr]
&
  \bullet
  \ar[r]
&
  \bullet
}
\]
automatically have the left square a pullback as maps in $\cal{M}$ are mono.
\end{remark}

\section{Extension operations}

\subsection{Extension operations}

We now define what we mean by an extension operation in the algebraic context.
For this, let $v \co \cal{F} \to \cal{E}^\to$ be a category of arrows such that
\[
\xymatrix@C-1em{
  \cal{F}
  \ar[rr]^{v}
  \ar[dr]_{\cod \cc v}
&&
  \cal{E}^\to
  \ar[dl]^{\cod}
\\&
  \cal{E}
}
\]
is a map of Grothendieck fibrations from $\cod \cc v$ to $\cod$.
Given a category $\cal{B} \to \cal{E}^\to$ of arrows, we write $\cal{B}_{\cart} \to \cal{E}^\to_{\cart}$ for its restriction to cartesian squares, formally a pullback of categories.
In order to forestall confusion, note that $\cal{F}_{\cart}$ does not denote the wide subcategory of $\cal{F}$ restricted to cartesian arrows with respect to ${\cod} \cc v$, though it does coincide with it in case $v$ reflects cartesian arrows.

\begin{definition}[Extension operation] \label{extension}
Consider a category of arrows $u \co \cal{A} \to \cal{E}^\to$.
An \emph{extension operation} of $\cal{F}$ along $\cal{A}$ consists of the following data.
For any situation
\begin{equation}
\label{extension-property:0}
\begin{gathered}
\xymatrix{
  X
  \ar@{.>}[r]
  \ar[d]
  \pullback{dr}
&
  Y
  \ar@{.}[d]
\\
  A
  \ar[r]
&
  B
}
\end{gathered}
\end{equation}
with $A \to B$ in $\cal{A}$ and $X \to A$ in $\cal{F}$, we functorially have $Y \to B$ in $\cal{F}$ forming a pullback square as indicated that furthermore lifts in horizontal direction to a morphism in $\cal{F}^\to$.
This gives rise to a functor $\cal{F} \times_{\cal{E}} \cal{A} \to \cal{F}^\to$.
Furthermore, we require that this restricts to a functor $\cal{F}_{\cart} \times_{\cal{E}} \cal{A}_{\cart} \to \cal{F}^\to_{\cart}$.
\end{definition}

\begin{remark} \label{extension-functorial}
Let $\cal{A}_1 \to \cal{A}_2$ be a morphism in $\Cat/\cal{E}^\to$.
By functoriality, an extension operation of $\cal{F}$ along $\cal{A}_2$ gives rise to an extension operation of $\cal{F}$ along $\cal{A}_1$.
\end{remark}

\begin{remark} \label{extension-retract-closure}
Using base change, an extension operation of $\cal{F}$ along $\cal{A}$ gives rise to an extension operation of $\cal{F}$ along the retract closure $\overline{\cal{A}}$ of $\cal{A}$.
\end{remark}

\subsection{Lifting extension operations}

Let $(\cal{E}, \cal{M}, \kappa, \cal{I}, u)$ be adequate in the sense of \cref{adequate,adequate-u} and let $\cal{F} \to \cal{E}^\to$ be a category of arrows.
Recall the sequence
\begin{equation} \label{extension-sequence}
\begin{aligned}
\xymatrix{
  \AWFS(\cal{E})
  \ar[r]^{G_1}
&
  \LAWFS(\cal{E})
  \ar[r]^{G_2}
&
  \Cmd(\cal{E}^\to)
  \ar[r]^{G_3}
&
  \Cat/\cal{E}^\to
}
\end{aligned}
\end{equation}
of forgetful functors from the last section.
The free algebraic weak factorization system on $(\cal{I}, u)$ was constructed by stepwise reflection.
We will say for each of the four stages in~\eqref{extension-sequence} what it means to have \emph{extension} of $\cal{F}$ along it.
\begin{itemize}
\item
For $(\cal{I}, u) \in \Cat/\cal{E}^\to$, the notion of extension of $\cal{F}$ along $\cal{I}$ is precisely given by an extension operation in the sense of \cref{extension}.
\item
For $U \in \Cmd(\cal{E}^\to)$, the notion of extension of $\cal{F}$ along $U$ is given by an extension operation of $\cal{F}$ along $(\cal{E}^\to, U) \in \Cat/\cal{E}^\to$.
\item
For $V \in \LAWFS(\cal{E})$ a codomain-preserving comonad, the notion of extension of $\cal{F}$ along $\cal{I}$ is given by an extension operation of $\cal{F}$ along $(\cal{E}^\to, V) \in \Cat/\cal{E}^\to$.
\item
For $(L, R) \in \AWFS(\cal{E})$, the notion of extension of $\cal{F}$ along $(L, R)$ is given by an extension operation of $\cal{F}$ along $\coalg(L)$.
\end{itemize}

\begin{remark} \label{extension-eventual-image}
Let $U$ be a comonad on $\cal{E}^\to$.
Given $f \in \cal{E}^\to$, a copointed coalgebra structure on $f$ exhibits $f$ as a retract of $U(f)$.
This gives rise to a morphism in $\Cat/\cal{E}^\to$ from $\coalg(U)$ to the retract closure $\overline{(\cal{E}^\to, U)}$.
Conversely, there is a map from $(\cal{E}^\to, U)$ to $\Coalg(U)$ given by free coalgebras and a forgetful map from $\Coalg(U)$ to $\coalg(U)$.
In light of \cref{extension-functorial,extension-retract-closure}, extension operations of $\cal{F}$ along either of $\coalg(U)$, $\Coalg(U)$, $(\cal{E}^\to, U)$ are interderivable.
With this, we see for each of the four states in~\eqref{extension-sequence} that extension of $\cal{F}$ along an object is equivalent to the existence of an extension operation of $\cal{F}$ along its eventual image in $\Cat/\cal{E}^\to$.
\end{remark}

For each of the functors $G_1$ to $G_3$, we will respectively show in \cref{extending-G3,extending-G2,extending-G1} that the reflection construction of \cref{reflecting-G3,reflecting-G2,reflecting-G1} lifts to the level of extension.

\begin{proposition} \label{extending-G3}
Let $(\cal{E}, \cal{M}, \kappa, \cal{I}, u)$ be adequate.
Let $U \in \Cmd(\cal{E}^\to)$ denote the reflection of $(\cal{I}, u)$ along $\Cmd(\cal{E}^\to) \to \Cat/\cal{E}^\to$ given by \cref{reflecting-G3}.

Let $v \co \cal{F} \to \cal{E}^\to$ be a category of arrows such that $v$ is a map of Grothendieck fibrations from $\cod \cc v$ to $\cod$ and such that $\cal{F}_{\cart} \to \cal{E}^{\to}_{\cart}$ lifts colimits of shape $\cal{I}/f$ that are van Kampen for $f \in \cal{E}^\to$.
If $\cal{F}$ has extension along $(\cal{I}, u)$, then also along $U$.
\end{proposition}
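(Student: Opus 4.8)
The plan is to glue extensions along the generators $\cal{I}$ over the colimit presentation of $U$. Recall from \cref{reflecting-G3} that $U = \Lan_u u$ is computed pointwise, so for $h \in \cal{E}^\to$ the object $U(h) \co D \to C$ is the colimit $\colim_{(i, \phi) \in \cal{I}/h} u(i)$ formed in $\cal{M}_{\cart}$, with $D = \colim \dom u(i)$ and $C = \colim \cod u(i)$; by \cref{adequate-u:van-kampen} this colimit, together with its domain and codomain projections, is van Kampen. Since $u$ factors through $\cal{M}_{\cart}$ by \cref{adequate-u:lift}, every transition square of the diagram $u \co \cal{I}/h \to \cal{E}^\to$ is a pullback, so in particular $\cal{I}_{\cart} = \cal{I}$. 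The extension data over $U(h)$ is a map $p \co X \to D$ in $\cal{F}$, and I must functorially produce $Y \to C$ in $\cal{F}$ completing the pullback square over $U(h)$ and lifting horizontally to $\cal{F}^\to$.

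First I would restrict the input to each generator. As $\cod \cc v$ is a Grothendieck fibration, pulling $p$ back along the domain leg $d_{(i, \phi)} \co \dom u(i) \to D$ yields $X_{(i, \phi)} \in \cal{F}$ over $\dom u(i)$; pullback pasting against the cartesian transition squares of $u$ makes this assignment functorial, giving a diagram $\cal{I}/h \to \cal{F}_{\cart} \times_{\cal{E}} \cal{I}_{\cart}$. Postcomposing with the cartesian extension operation $\cal{F}_{\cart} \times_{\cal{E}} \cal{I}_{\cart} \to \cal{F}^\to_{\cart}$ provided by extension along $\cal{I}$ produces a functor $\cal{I}/h \to \cal{F}^\to_{\cart}$, \ie a coherent family of pullback extension squares with left legs $X_{(i, \phi)} \to \dom u(i)$ and right legs $Y_{(i, \phi)} \to \cod u(i)$ in $\cal{F}$. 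Write $X_\bullet, Y_\bullet \co \cal{I}/h \to \cal{F}_{\cart}$ for the two legs.

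To glue I invoke the hypothesis that $\cal{F}_{\cart} \to \cal{E}^\to_{\cart}$ lifts van Kampen colimits of shape $\cal{I}/h$. The diagram $v \cc Y_\bullet$ has codomain diagram $\cod u(-)$, whose colimit $C$ is van Kampen by \cref{adequate-u:van-kampen}, so $Y_\bullet$ admits a colimit $Y$ in $\cal{F}_{\cart}$ lying over an arrow $Y \to C$. Taking the colimit of the entire extension-square diagram $\cal{I}/h \to \cal{F}^\to_{\cart}$ then produces a morphism of $\cal{F}^\to$ whose underlying square has bottom $U(h) \co D \to C$, right leg $Y \to C$ in $\cal{F}$, and left leg the induced map out of $X' \defeq \colim_{\cal{I}/h} X_\bullet$.

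The crux is a van Kampen descent identifying $X'$ with both $X$ and $D \times_C Y$. By part~(ii) of \cref{van-kampen} applied to the van Kampen colimit $C = \colim \cod u(i)$, each comparison square $Y_{(i, \phi)} \to Y$ over $\cod u(i) \to C$ is a pullback; since the extension squares are pullbacks this gives $X_{(i, \phi)} = \dom u(i) \times_{\cod u(i)} Y_{(i, \phi)} = \dom u(i) \times_C Y$. Stability of the van Kampen colimit $D = \colim \dom u(i)$ under pullback, applied along $D \times_C Y \to D$ and along $X \to D$, then identifies $X' = \colim_{\cal{I}/h} X_{(i, \phi)}$ with both $D \times_C Y$ and $X$, exhibiting the glued square as the pullback of $Y \to C$ along $U(h)$. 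Functoriality in $h$ and the restriction to cartesian squares follow from functoriality of the pointwise left Kan extension, of the extension operation along $\cal{I}$, and of the uniquely determined van Kampen lifts; the construction is thereby an extension operation of $\cal{F}$ along $(\cal{E}^\to, U)$, \ie extension along $U$.
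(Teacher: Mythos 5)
Your proof is correct and takes essentially the same route as the paper's: pull the given object of $\cal{F}$ back along the domain coprojections of the pointwise colimit $U(h) = \colim_{\cal{I}/h} u(i)$, extend along each generator using the cartesian part of the given extension operation, glue using the hypothesis that $\cal{F}_{\cart} \to \cal{E}^\to_{\cart}$ lifts the van Kampen colimits of shape $\cal{I}/h$, and identify the left leg of the resulting square with the original input via pullback stability of the van Kampen colimit. There is no substantive difference from the paper's argument.
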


\begin{proof}
Recall from \cref{reflecting-G3} that $U = \Lan_u u = I \cc \Lan_u u'$ where $u' \co \cal{I} \to \cal{M}_{\cart}$ was the lift of $u$ through $\cal{M}_{\cart} \to \cal{E}^\to$.
We abbreviate $u^s \defeq {\dom} \cc u$ and $u^t \defeq {\cod} \cc u$.

Let $f \co X \to Y$ be a map in $\cal{E}$ and let $v_a \co A \to \dom(U f)$ be an object of $\cal{F}$.
In order to extend the latter along
\begin{equation*}
U(f) \co \colim_{u(i) \to f} u^s(i) \to \colim_{u(i) \to f} u^t(i)
,\end{equation*}
we first pull it back along $u^s(i) \to \dom(U f)$ to an object $v_{a_{(i, h)}} \co A_{(i, h)} \to u^s(i)$ of $\cal{F}$ for each $h \co u(i) \to f$.
We then use the given extension operation to extend functorially in $(i, h)$ along $u(i)$ to produce an object $v_{b_{(i, h)}} \co B_{(i, h)} \to u^t(i)$ of $\cal{F}$.
By assumption, the cartesian squares $v_{a(i, h)} \to v_{a(i', h')}$ get mapped to cartesian squares $v_{b(i, h)} \to v_{b(i', h')}$.

Recalling that $u$ is valued in $\cal{M}_{\cart}$, we now have a diagram of shape $\braces{\bullet \to \bullet} \times \cal{I}/f$ in $\cal{M}_{\cart}$.
Using~\ref{adequate-u:colimits}, we may take its colimit, which is also a colimit in $\cal{E}^\to$, forming a pullback square
\[
\xymatrix{
  A
  \ar[r]
  \ar[d]^{v_a}
  \pullback{dr}
&
  B
  \ar[d]^{v_b}
\\
  \dom(U f)
  \ar[r]^{U f}
&
  \cod(U f)
\rlap{.}}
\]
Since the colimit for $\dom(U f)$ is van Kampen via~\ref{adequate-u:van-kampen}, the left map is the map $v_a$ we started with.
The right map $B \to \cod(U f)$ is the colimit of the maps $v_{b(i, h)} \co B_{(i, h)} \to u^t(i)$ in $\cal{E}^\to_{\cart}$ and thus lifts to a colimit $b \in \cal{F}_{\cart}$ as indicated since $\cal{F}_{\cart} \to \cal{E}^{\to}_{\cart}$ lifts colimits of shape $\cal{I}/f$.
Note that the coprojections $v_{b(i, j)} \to v_b$ are pullback squares since the colimit for $\cod(U f)$ is van Kampen via~\ref{adequate-u:van-kampen}.
Functoriality of colimits then induces the required lift of the above square to a morphism in $\cal{F}$.

The construction is evidently functorial.
To see that is maps cartesian inputs to cartesian outputs, use the van Kampen property.
\end{proof}

\begin{proposition} \label{extending-G2}
Let $(\cal{E}, \cal{M}, \kappa)$ be adequate.
Let $U \in \Cmd(\cal{E}^\to)$ satisfy the assumptions of \cref{reflecting-G2} and let $V$ denote its associated reflection along $\LAWFS(\cal{E}) \to \Cmd(\cal{E}^\to)$.

Let $v \co \cal{F} \to \cal{E}^\to$ be a category of arrows such that $v$ is a map of Grothendieck fibrations from $\cod \cc v$ to $\cod$ and such that $\cal{F}_{\cart} \to \cal{E}^\to_{\cart}$ lifts pushouts along maps in $\overline{\cal{M}}$.
If $\cal{F}$ has extension along $U$, then also along $V$.
\end{proposition}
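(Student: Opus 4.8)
The plan is to reduce extension along $V$ to the given extension along $U$, using that by the construction in \cref{reflecting-G2} the map $V f$ is a pushout of $U f$. Fix $f \in \cal{E}^\to$ and write $a \co \dom(U f) \to \dom(f)$ for the domain component of the counit of $U$ at $f$. Since $U$ is valued in $\cal{M}_{\cart}$ we have $U f \in \cal{M}$, and \cref{reflecting-G2} realizes $V f$ as the pushout of $U f$ along $a$; this pushout is domain-preserving, so $\dom(V f) = \dom(f)$ and the coprojection $\dom(f) \to \cod(V f)$ is $V f$ itself:
\[
\xymatrix{
  \dom(U f)
  \ar[r]^{a}
  \ar[d]_{U f}
&
  \dom(f)
  \ar[d]^{V f}
\\
  \cod(U f)
  \ar[r]
&
  \cod(V f)
  \pullback{ul}
\rlap{.}}
\]

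Given an object $v_x \co X \to \dom(V f) = \dom(f)$ of $\cal{F}$, I would build its extension along $V f$ in three steps. First, since $v$ is a map of fibrations, pull $v_x$ back along $a$ to an object $v_{x'} \co X' \to \dom(U f)$ of $\cal{F}$ together with a cartesian square $v_{x'} \to v_x$. Second, feed $v_{x'}$ and $U f$ into the given extension operation along $U$, producing an object $v_{y'} \co Y' \to \cod(U f)$ of $\cal{F}$ and a pullback square $v_{x'} \to v_{y'}$ over $U f$. As this square is a pullback of $U f \in \cal{M}$ and $\cal{M}$ is pullback-stable, its domain component $X' \to Y'$ again lies in $\cal{M}$; thus $v_{x'} \to v_{y'}$ is a morphism of $\cal{F}_{\cart}$ whose image in $\cal{E}^\to_{\cart}$ lies in $\overline{\cal{M}}$, while $v_{x'} \to v_x$ is cartesian by construction, so the span $v_x \leftarrow v_{x'} \to v_{y'}$ lives in $\cal{F}_{\cart}$.

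Third, push this span out along its $\overline{\cal{M}}$-leg $v_{x'} \to v_{y'}$. By \ref{adequate:pushout} the pushout of $U f$ along $a$ is van Kampen, so by \cref{van-kampen-lift} the corresponding pushout exists in $\cal{E}^\to_{\cart}$ with cartesian coprojections, and the hypothesis that $\cal{F}_{\cart} \to \cal{E}^\to_{\cart}$ lifts pushouts along $\overline{\cal{M}}$ lifts it further to $\cal{F}_{\cart}$. On codomains this pushout reproduces the square above defining $V f$, so the coprojection out of $v_x$ lies over $V f$; and the van Kampen property is precisely what forces this coprojection to be a cartesian square, that is, a pullback. The resulting object $v_y \co Y \to \cod(V f)$ of $\cal{F}$, equipped with this pullback square, is the required extension of $v_x$ along $V f$.

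Finally, the whole assignment is functorial because each of the three steps — fibration pullback, extension along $U$, and the van Kampen pushout (cf.\ \cref{adequate-pushout}) — is, and it sends cartesian morphisms to cartesian morphisms, so it restricts appropriately to cartesian squares as required by \cref{extension}, the pushout step again relying on the van Kampen property. The step I expect to demand the most care is this last pushout: one must match the hypothesis that $\cal{F}_{\cart}$ lifts $\overline{\cal{M}}$-pushouts against the van Kampen condition on pushouts in $(\cal{E}, \cal{M})$, so as both to identify $\cod(V f)$ as the codomain of the output and to certify that the output coprojection is a genuine pullback square.
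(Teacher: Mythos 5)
Your proposal is correct and follows essentially the same route as the paper's proof: recall that $V f$ is the domain-preserving pushout of $U f$ along the counit, pull the given $\cal{F}$-object back along that counit using the fibration structure, extend along $U f$ with the given operation, and then use the van Kampen property of pushouts along $\cal{M}$ together with the hypothesis that $\cal{F}_{\cart} \to \cal{E}^\to_{\cart}$ lifts $\overline{\cal{M}}$-pushouts to complete the square over $V f$. Your additional check that the leg $v_{x'} \to v_{y'}$ lands in $\overline{\cal{M}}$ (via pullback-stability of $\cal{M}$) is a detail the paper leaves implicit but is exactly right.
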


\begin{proof}
This is a consequence of condition~\ref{adequate:pushout} that pushouts in $\cal{E}$ along $\cal{M}$ are van Kampen.
Recall the construction of $V f \co X \to C$ from $U f \co A \to B$ in the proof of \cref{reflecting-G2} where $f \co X \to Y$ is a map in $\cal{E}$:
\begin{equation} \label{extending-G2:0}
\begin{gathered}
\xymatrix{
  A
  \ar[r]
  \ar[d]_{U f}
&
  X
  \ar[d]^{V f}
\\
  B
  \ar[r]
&
  C
  \pullback{ul}
\rlap{.}}
\end{gathered}
\end{equation}
To extend $v_x \co X' \to X$ with $x \in \cal{F}$ along $V f$, we first pull it back along $A \to X$ to $v_a \co A' \to A$ using that $v \co \cal{F} \to \cal{E}^\to$ is a Grothendieck fibration.
We then use the given extension operation to extend $v_a$ along $U f$ to get $v_b \co B' \to B$.
The extension to $C$ then follows since
\[
\xymatrix{
  \cal{F}_{\cart}
  \ar[r]
&
  \cal{E}^\to_{\cart}
  \ar[r]^-{\cod}
&
  \cal{E}
}
\]
lifts pushouts along maps in $\cal{M}$ by assumption and \cref{van-kampen-pushout}.
The entire construction is functorial in maps from $f_1 \co X_1 \to Y_1$ to $f_2 \co X_2 \to Y_2$ (which get mapped to cartesian squares via $U$ and $V$) and $v_{x_1} \co X_1' \to X_1$ to $v_{x_2} \co X_2' \to X_2$ and furthermore results in a cartesian square if the square $v_{x_1} \to v_{x_2}$ is cartesian.
\end{proof}

The below proof crucially uses the fine-grained functoriality of free monad sequences in non-cocomplete settings devloped in \cref{free-monad-sequence}.

\begin{proposition} \label{extending-G1}
Let $(\cal{E}, \cal{M}, \kappa)$ be adequate.
Let $V \in \Cmd(\cal{E}^\to)$ satisfy the assumptions of \cref{reflecting-G1} and let $(L, R)$ denote its associated reflection along $\AWFS(\cal{E}) \to \LAWFS(\cal{E}^\to)$.

Let $\cal{F} \to \cal{E}^\to$ be a category of arrows such that $\cal{F}_{\cart} \to \cal{E}^{\to}_{\cart}$ lifts pushouts and $\alpha$-transfinite compositions of maps in $\overline{\cal{M}}$ for limit ordinals $\alpha \leq \kappa$.
If $\cal{F}$ has extension along $V$, then also along $(L, R)$.
\end{proposition}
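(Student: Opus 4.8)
The plan is to reduce to the fine-grained functoriality of the free monad sequence in \cref{pointed-free-monad}, applied to the pointed endofunctor $R_0$ from the proof of \cref{reflecting-G1}. Recall from there that the monad $R$ (the right half of $(L, R)$) is the free monad on the pointed endofunctor $(R_0, \eta_0)$ on $\cal{E}^\to$, where $R_0(f)$ is the codomain part of the counit of $V$ on $f$; the domain part of $\eta_0$ recovers $V$, and dually $\dom \cc \eta^R = L$. By the eventual-image discussion of \cref{extension-eventual-image}, extension of $\cal{F}$ along $(L, R)$ amounts to an extension operation along $(\cal{E}^\to, L) = (\cal{E}^\to, \dom \cc \eta^R)$, while the hypothesis of extension along $V$ is extension along $(\cal{E}^\to, \dom \cc \eta_0)$. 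The task is thus to promote an extension operation along $\dom \cc \eta_0$ to one along $\dom \cc \eta^R$, following the free monad sequence.

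First I would package the given extension operation as a pointed endofunctor. Set $\cal{G} \defeq \cal{F}_{\cart} \times_{\cal{E}} \cal{E}^\to$, whose objects are pairs $(x, f)$ of an $\cal{F}$-object $x \co P \to A$ and a map $f \co A \to B$ in $\cal{E}$. Feeding $(x, f)$ into the extension operation along $V$, extending $x$ along $Vf \co A \to Ef = \dom(R_0 f)$, yields an $\cal{F}$-object over $\dom(R_0 f)$ together with the horizontal lift in $\cal{F}^\to$; I define $R_0'(x, f)$ to be this extended $\cal{F}$-object paired with $R_0 f$, the pointing $\eta^{R_0'}$ being given on the $\cal{E}^\to$-factor by $\eta_0$ and on the $\cal{F}$-factor by the horizontal lift. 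By construction the projection $P \co \cal{G} \to \cal{E}^\to$ to the $\cal{E}^\to$-factor carries $(R_0', \eta^{R_0'})$ to $(R_0, \eta_0)$. Let $\cal{N}$ be the class of maps of $\cal{G}$ whose images under both projections lie in $\overline{\cal{M}}$.

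The main work is to check that $(\cal{G}, \cal{N}, R_0')$ is an object of $\ConfP{\kappa}$ and that $P$ is a morphism to $(\cal{E}^\to, \overline{\cal{M}}, R_0)$. Colimits along $\cal{N}$ are computed factorwise: the $\cal{E}^\to$-factor acquires its $\kappa$-compositions and pushouts from the van Kampen colimits of $(\cal{E}, \cal{M})$ via \cref{van-kampen-composition,van-kampen-pushout}, and the hypothesis that $\cal{F}_{\cart} \to \cal{E}^\to_{\cart}$ lifts pushouts and $\alpha$-transfinite compositions of maps in $\overline{\cal{M}}$ supplies the $\cal{F}_{\cart}$-factor; this is exactly where the forgetful functor $P$ preserves the needed (but not all) connected colimits. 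That $R_0'$ preserves $\cal{N}$ and the relevant colimits then reduces, factorwise, to the corresponding facts for $R_0$ (already verified in \cref{reflecting-G1}) together with functoriality and the cartesian-square restriction of the extension operation; the Leibniz-application condition is obtained as for $R_0$ via the analogue of \cref{pointed-configuration-simplification} in $\cal{G}$, the unit $\eta^{R_0'}$ being cartesian since both its factors are. I expect this verification, in particular the convergence and Leibniz-application conditions, to be the main obstacle, as it is here that the compatibility of the extension operation with the parametrizing colimits must be made precise.

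With the configuration and morphism in hand, \cref{pointed-free-monad} produces free monads $T'$ on $R_0'$ and $R$ on $R_0$ and lifts $P$ to a monad morphism, so that $P$ carries $\eta^{T'}$ to $\eta^R$. Unwinding, $T'(x, f)$ is an $\cal{F}$-object over $\dom(R f) = Ef$, and the $\cal{F}^\to$-component of $\eta^{T'}_{(x, f)}$ exhibits it as the extension of $x$ along $\dom(\eta^R_f) = Lf$, functorially and forming a pullback square; cartesianness is retained because the entire construction takes place in $\cal{F}_{\cart}$ and the colimits involved are van Kampen. This is precisely an extension operation of $\cal{F}$ along $(\cal{E}^\to, \dom \cc \eta^R) = (\cal{E}^\to, L)$, which by \cref{extension-eventual-image} is an extension operation along $\coalg(L)$, \ie extension of $\cal{F}$ along $(L, R)$.
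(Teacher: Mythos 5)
Your proposal follows essentially the same route as the paper's proof: reduce via \cref{extension-eventual-image} to extension along $(\cal{E}^\to, \dom \cc \eta^R) = (\cal{E}^\to, L)$, package the given extension operation along $V$ as a pointed endofunctor lifting $(R_0, \eta_0)$ to a fiber product of $\cal{F}$ with $\cal{E}^\to$ carrying a suitable class of maps, check the conditions of \cref{pointed-configurations}, and invoke the functoriality of \cref{pointed-free-monad} to obtain a lift of $R$ whose unit supplies the extension operation. The one substantive divergence is your choice of base category $\cal{F}_{\cart} \times_{\cal{E}} \cal{E}^\to$: the paper instead works with $\cal{D} = \cal{F} \times_{\cal{E}} \cal{E}^\to$ and pushes the cartesianness requirement into the parametrizing class $\cal{N}$ (morphisms landing in $\overline{\cal{M}}$ on the $\cal{E}^\to$ side and in pullback squares on the $\cal{F}$ side). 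This matters because \cref{extension} demands a functor on all of $\cal{F} \times_{\cal{E}} \coalg(L)$, not merely on its cartesian part; with your setup the free monad is only functorial in cartesian $\cal{F}$-morphisms, so you recover only the restricted functor $\cal{F}_{\cart} \times_{\cal{E}} \cal{A}_{\cart} \to \cal{F}^\to_{\cart}$ rather than the full extension operation. The repair is exactly the paper's device: keep every morphism of $\cal{F}$ in the category, and confine the cartesian restriction to the class of maps, which is the only place the free monad sequence actually needs it.
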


\begin{proof}
In light of \cref{extension-eventual-image}, it will suffice to exhibit an extension operation of $\cal{F}$ along $(\cal{E}^\to, L)$, or equivalently along the domain part the unit $\eta$ of $R$.

Recall from \cref{reflecting-G1} via functoriality of \cref{pointed-free-monoid} that the monad $R$ is given by via \cref{pointed-free-monad} by the free monad sequence on the pointed functor $(R_0, \eta_0)$ on $\cal{E}^\to$ where $R_0 f$ is the codomain part of the counit of $V$ and $\eta_f = (V f, \id_Y)$ for $f \co X \to Y$ in $\cal{E}$.
Given to us is an extension operation of $\cal{F}$ along $(\cal{E}^\to, V)$, or equivalently along the domain part of the unit $\eta_0$ of $R_0$.

Let us define a new category $\cal{D}$ via the following pullback:
\[
\xymatrix{
  \cal{D}
  \ar[rr]^{U}
  \ar[d]
  \pullback{dr}
&&
  \cal{E}^\to
  \ar[d]^{\dom}
\\
  \cal{F}
  \ar[r]
&
  \cal{E}^\to
  \ar[r]_-{\cod}
&
  \cal{E}
\rlap{.}}
\]
Let $\cal{N}$ denote the class of morphisms in $\cal{D}$ that map to $\overline{\cal{M}}$ via $U$ and to pullback squares via $\cal{D} \to \cal{F} \to \cal{E}^\to$.
By assumptions and \cref{van-kampen-composition,van-kampen-pushout}, note that $(\cal{D}, \cal{N})$ has $\kappa$-compositions and pushouts and they are preserved by $U \co (\cal{D}, \cal{N}) \to (\cal{E}^\to, \overline{\cal{M}})$.
Lifting~\ref{adequate:binary-union}, we get that $(\cal{D}, \cal{N})$ has binary unions.

The setting $(\cal{D}, \cal{N})$ has been designed such that an extension operation of $\cal{F}$ along the domain part of the unit $\eta$ of a pointed endofunctor $F$ on $\cal{E}^\to$ where $\eta$ is valued in $\overline{\cal{M}}$ and $F$ preserves $\overline{\cal{M}}$ corresponds to a lift through $U$ of $(F, \eta)$ to a pointed endofunctor $(\hat{F}, \hat{\eta})$ on $\cal{D}$ where $\hat{\eta}$ is valued in $\cal{D}$ (since extension operations involve a cartesian square) and $\hat{F}$ preserves $\cal{N}$ (since the action on morphisms of an extension operation sends cartesian input to cartesian output).
We are given such a lift $(\hat{R}_0, \hat{\eta}_0)$ of $(R_0, \eta_0)$ and aim to produce a lift $(\hat{R}, \hat{\eta})$ of $(R, \eta)$.
Naturally, this will be an instance of functoriality of free monads.

Let us verify the remaining conditions of \cref{pointed-configurations} for $(\cal{D}, \cal{N}, \hat{R}_0)$.
Since $\eta_0$ is cartesian, so is $\hat{\eta}_0$.
Preservation by $\hat{R}_0$ of $\kappa$-transfinite compositions of maps in $\cal{N}$ is inherited from $R_0$.
Thus, \cref{pointed-free-monad} gives us the free monad $\hat{R}$ on $\cal{D}$ generated by $\hat{R}_0$.
By construction, $U$ induces a morphism from $(\cal{D}, \cal{N}, F_0)$ to $(\cal{E}^\to, \overline{\cal{M}}, R_0)$.
Functoriality of \cref{pointed-free-monad} hence tells us that $\hat{R}$ is a lift of $R$ to $\cal{D}$.
Thus we derive the required extension operation for $(R, \eta)$.
\end{proof}

\begin{definition}
Given adequate $(\cal{E}, \cal{M}, \kappa, \cal{I}, u)$, a category $v \co \cal{F} \to \cal{E}^\to$ of arrows is \emph{adequate} if
\[
\xymatrix@C-1em{
  \cal{F}
  \ar[rr]^{v}
  \ar[dr]_{\cod \cc v}
&&
  \cal{E}^\to
  \ar[dl]^{\cod}
\\&
  \cal{E}
}
\]
is a morphism of Grothendieck fibrations and $\cal{F}_{\cart} \to \cal{E}^\to_{\cart}$ lifts pushouts and $\alpha$-transfinite compositions for limit ordinals $\alpha \leq \kappa$ of maps in $\overline{\cal{M}}$ and colimits of shape $\cal{I}/f$ that are van Kampen for $f \in \cal{E}^\to$.
\end{definition}

Combinding \cref{extending-G3,extending-G2,extending-G1}, we derive our main theorem.

\begin{theorem} \label{extending}
Let $(\cal{E}, \cal{M}, \kappa, \cal{I}, u)$ be adequate.
Let $(L, R)$ denote the algebraic weak factorization system generated by it as per \cref{reflecting}.

Let $\cal{F} \to \cal{E}^\to$ be an adequate category of arrows.
An extension operation of $\cal{F}$ along $\cal{I}$ induces an extension operation of $\cal{F}$ along $\coalg(L)$.
\qed
\end{theorem}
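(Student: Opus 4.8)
The plan is to transport the given extension operation forward along the very same stepwise reflection by which $(L, R)$ was built in \cref{reflecting}, invoking the three transport results \cref{extending-G3,extending-G2,extending-G1} in sequence. Recall that the free algebraic weak factorization system was obtained by successively reflecting $(\cal{I}, u) \in \Cat/\cal{E}^\to$ through the forgetful functors $G_3$, $G_2$, $G_1$ of~\eqref{extension-sequence}: first to a comonad $U \in \Cmd(\cal{E}^\to)$ by \cref{reflecting-G3}, then to a domain-preserving comonad $V \in \LAWFS(\cal{E})$ by \cref{reflecting-G2}, and finally to $(L, R) \in \AWFS(\cal{E})$ by \cref{reflecting-G1}. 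Since extension along $(L, R)$ was defined to mean an extension operation of $\cal{F}$ along $\coalg(L)$, it suffices to push the given extension operation along $\cal{I}$ through these three stages.

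Concretely, I would first apply \cref{extending-G3} to upgrade the extension operation along $(\cal{I}, u)$ to one along $U$; its hypotheses on $\cal{F}$---that $v$ is a morphism of Grothendieck fibrations and that $\cal{F}_{\cart} \to \cal{E}^\to_{\cart}$ lifts the van Kampen colimits of shape $\cal{I}/f$---are precisely part of the adequacy of $\cal{F}$. Next, \cref{extending-G2} carries this to an extension operation along $V$, using that $\cal{F}_{\cart} \to \cal{E}^\to_{\cart}$ lifts pushouts along maps in $\overline{\cal{M}}$. Finally, \cref{extending-G1} produces an extension operation along $(L, R)$, using that $\cal{F}_{\cart} \to \cal{E}^\to_{\cart}$ lifts both pushouts and $\alpha$-transfinite compositions of maps in $\overline{\cal{M}}$ for limit ordinals $\alpha \leq \kappa$. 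Each of these lifting conditions is again supplied by the hypothesis that $\cal{F}$ is an adequate category of arrows.

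The only bookkeeping is to confirm that the intermediate objects $U$ and $V$ meet the input hypotheses of the subsequent propositions, but this was already recorded while constructing them: by \cref{reflecting-G3}, $U$ lifts through $\cal{M}_{\cart} \to \cal{E}^\to$ and preserves $\overline{\cal{M}}$ together with $\kappa$-transfinite compositions of maps in $\overline{\cal{M}}$, which are exactly the assumptions of \cref{extending-G2}; and by \cref{reflecting-G2}, $V$ inherits these same features, matching the assumptions of \cref{extending-G1}. No genuinely new construction is needed here, since the entire technical burden---in particular the free monad sequence analysis underlying \cref{extending-G1}, carried out via \cref{free-monad-sequence}---has already been discharged. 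I therefore expect no real obstacle: the theorem is a direct composition of the three transport results, which is why it may be stated with \qed.
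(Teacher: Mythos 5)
Your proposal is correct and follows exactly the paper's intended argument: the theorem is derived by composing \cref{extending-G3,extending-G2,extending-G1} along the stepwise reflection of \cref{reflecting}, with the adequacy of $\cal{F}$ supplying the lifting hypotheses at each stage and \cref{reflecting-G3,reflecting-G2} supplying the input hypotheses on $U$ and $V$. This matches the one-line justification the paper gives immediately before the \qed'd statement.
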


Instantiating $(\cal{E}, \cal{M})$ to the class of monomorphisms in a Grothendieck topos, we derive the following corollary.

\begin{corollary} \label{extending-grothendieck-topos}
Let be given a Grothendieck topos $\cal{E}$, a functor $u \co \cal{I} \to \cal{E}^\to$ valued in monomorphisms and pullback squares with $\cal{I}$ small, and a functor $v \co \cal{F} \to \cal{E}^\to$ such that $v \co (\cal{F}, \cod \cc v) \to (\cal{E}^\to, \cod)$ is a morphism of Grothendieck fibrations over $\cal{E}$.

Assume that the colimit $(\Lan_u u)(f) = \colim_{u_i \to f} u_i$ is van Kampen for any $f \in \cal{E}^\to$ and that $\Lan_u u$ preserves morphisms in $\cal{E}^\to$ whose domain and codomain parts are monomorphisms.
Assume that $\cal{F}_{\cart} \to \cal{E}^\to_{\cart}$ lifts colimits that are van Kampen.

Let $(L, R)$ denote the algebraic weak factorization system cofibrantly generated by $\cal{I}$.
Then an extension operation of $\cal{F}$ along $\cal{I}$ induces an extension operation of $\cal{F}$ along $\coalg(L)$.
\end{corollary}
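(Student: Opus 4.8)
The plan is to reduce the statement to the main \cref{extending} by checking that the stated topos-theoretic data satisfies the adequacy hypotheses of \cref{adequate,adequate-u} and that $\cal{F}$ is an adequate category of arrows. First I would fix the regular ordinal $\kappa$. Since $\cal{E}$ is a Grothendieck topos it is locally presentable, so each of the domains of the arrows in the small category $u$ is $\lambda$-compact for some regular $\lambda$; as $\cal{I}$ is small there are only set-many such domains, and taking $\kappa$ to dominate their compactness ranks secures condition~\ref{adequate-u:compact}.

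Next I would verify that $(\cal{E}, \cal{M}, \kappa)$ is adequate in the sense of \cref{adequate} with $\cal{M}$ the class of monomorphisms. Condition~\ref{adequate:pullback} holds as toposes have pullbacks and monos are pullback-stable. Conditions~\ref{adequate:composition} and~\ref{adequate:pushout} use that, in a topos, transfinite composites and pushouts of monomorphisms exist, preserve monos, and are van Kampen (the topos being adhesive and exhaustive, as recalled in the introduction); condition~\ref{adequate:binary-union} follows from the subobject lattice. For the $u$-adequacy of \cref{adequate-u}, condition~\ref{adequate-u:lift} is exactly the hypothesis that $u$ lands in monos and pullback squares. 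For condition~\ref{adequate-u:colimits} I would argue that the slice colimit $\Lan_u u(f)$ exists in $\cal{E}^\to$ by cocompleteness, is van Kampen by hypothesis (hence pullback-stable), and so by \cref{van-kampen-lift} lifts to $\cal{E}^\to_{\cart}$; since it is a van Kampen colimit of monomorphisms the lift lands in $\cal{M}_{\cart}$ and is preserved by the inclusion. Conditions~\ref{adequate-u:preservation} and~\ref{adequate-u:van-kampen} are precisely the two remaining stated hypotheses on $\Lan_u u$.

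It then remains to check that $\cal{F}$ is adequate. The morphism-of-fibrations requirement is assumed. The remaining requirement is that $\cal{F}_{\cart} \to \cal{E}^\to_{\cart}$ lift pushouts and $\alpha$-transfinite composites (for limit $\alpha \leq \kappa$) of maps in $\overline{\cal{M}}$, as well as van Kampen colimits of shape $\cal{I}/f$. In a topos the first two families of colimits are automatically van Kampen, while the slice colimits are van Kampen by assumption; hence every colimit whose lifting is demanded is van Kampen, and the single stated hypothesis that $\cal{F}_{\cart} \to \cal{E}^\to_{\cart}$ lifts van Kampen colimits discharges all of them at once. With both adequacy statements established, \cref{extending} delivers the extension operation of $\cal{F}$ along $\coalg(L)$.

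I expect the main obstacle to be purely organizational rather than conceptual: one must be careful to match each separately phrased colimit condition of \cref{adequate-u} and of adequacy of $\cal{F}$ against the appropriate van Kampen property of the topos, checking in particular that the slice colimits of shape $\cal{I}/f$, which are not among the generic saturation colimits, are indeed covered by the explicit van Kampen hypothesis. All the real mathematical content is carried by \cref{extending}; this corollary is merely its instantiation in the monomorphism factorization system of a Grothendieck topos.
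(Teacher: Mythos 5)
Your proposal is correct and follows the same route as the paper's own proof: instantiate $\cal{M}$ as the monomorphisms, obtain conditions~\ref{adequate:pullback}--\ref{adequate:binary-union} from pullback-stability of monos, adhesivity, exhaustivity, and the subobject lattice, use local presentability for~\ref{adequate-u:compact}, read off the remaining conditions of \cref{adequate-u} and the adequacy of $\cal{F}$ from the explicit hypotheses, and conclude by \cref{extending}. Your write-up is in fact somewhat more detailed than the paper's, which leaves most of the condition-matching implicit.
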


\begin{proof}
The topos $\cal{E}$ has pullbacks, pullbacks preserve monomorphisms~\ref{adequate:pullback}, and monomorphisms in $\cal{E}$ are closed under binary unions~\ref{adequate:binary-union}.
It is well-known by now that topoi are adhesive (monomorphisms in a topos are adhesive morphisms, \ie pushouts of monomorphisms are van Kampen~\ref{adequate:pushout}).
A first attempt of a constructive proof was given in \cite{lack:toposes-adhesive}, with a complete, though non-constructive proof in \cite{lack:embedding-adhesive}.
To our knowledge, the first full constructive proof can be found in \cite{garner-lack:adhesive}.
Similarly, Grothendieck topoi are exhaustive (transfinite compositions of monomorphisms are van Kampen~\ref{adequate:composition}).
\notec{Insert reference}

We use local presentability of $\cal{E}$ to satisfy to satisfy~\ref{adequate-u:compact}.
The remaining conditions of \cref{extending} are satisfied by assumption. 
\end{proof}

\notec{Add examples and discuss the connection with classifying objects.}

\notec{Add more relevant citations.}

\bibliographystyle{plain}
\bibliography{../../common/uniform-kan-bibliography}

\end{document}